\def\1{\mathbf 1}
\def\1{\bold 1}
\def\div{\mathrm{div}\,}
\theoremstyle{theorem}
\newtheorem{theorem}{Theorem}[section]
\newtheorem{proposition}[theorem]{Proposition}
\newtheorem{lemma}[theorem]{Lemma}
\newtheorem{remark}[theorem]{Remark}
\newtheorem{corollary}[theorem]{Corollary}
\theoremstyle{plain}
\newtoks\thehProclaim
\newtheorem*{Proclaim}{\the\thehProclaim}
\theoremstyle{definition}
\newtoks\thehDefinition
\newtheorem*{Definition}{\the\thehDefinition}
\numberwithin{equation}{section}
\providecommand{\keywords}[1]{\textbf{{Key words:}} #1}
\providecommand{\subjclass}[1]{\textit{{2020 Mathematics Subject Classification.}} #1}
\begin{document}
\title{Homogenization of the first initial-boundary value problem for periodic hyperbolic systems. Principal term of approximation
\footnote{\subjclass{Primary 35B27. Secondary 35L53.}}
}
\author{Yulia Meshkova \footnote{ E-mail: {\tt{juliavmeshke@yandex.ru}}. }}

\maketitle

\begin{abstract}
Let $\mathcal{O}\subset \mathbb{R}^d$ be a bounded domain of class $C^{1,1}$. In $ L_2(\mathcal{O};\mathbb{C}^n)$, we consider a matrix elliptic second order differential operator $A_{D,\varepsilon}$ with the Dirichlet boundary condition. Here $\varepsilon >0$ is a small parameter. The coefficients of the operator $A_{D,\varepsilon}$ are  periodic and depend on $\mathbf{x}/\varepsilon$. The principal terms of approximations for the operator cosine and sine functions are given in the $(H^2\rightarrow L_2)$- and $(H^1\rightarrow L_2)$-operator norms, respectively. The error estimates are of the precise order $O(\varepsilon)$ for a fixed time. The results in operator terms are derived from the quantitative homogenization estimate for approximation of the solution of the initial-boundary value problem for the equation $(\partial _t^2+A_{D,\varepsilon})\mathbf{u}_\varepsilon =\mathbf{F}$. 
\end{abstract}

\keywords{periodic differential operators, homogenization, convergence rates, hyperbolic systems.}

\section*{Introduction}
The paper is devoted to homogenization of periodic differential operators (DO's). More precisely, we are interested in the so-called operator error estimates, i.~e., in quantitative homogenization results, admitting formulation as estimates in the uniform operator topology.

For elliptic and parabolic problems, estimates of such type are very well studied, see, e.~g., books \cite[Chapter 14]{CDGr} and \cite{Sh}, the survey \cite{ZhPasUMN}, the papers \cite{BSu,Su07} and references therein. The hyperbolic problems and the non-stationary Schr\"odinger equation were considered in \cite{BSu08}. See also very resent results \cite{DSuNew,SuUMN}.
 
\subsection{The class of operators} 

Let $\Gamma\subset\mathbb{R}^d$ be a lattice. 
For a $\Gamma$-periodic function $\psi$ in $\mathbb{R}^d$, we denote $\psi ^\varepsilon (\mathbf{x}):=\psi (\mathbf{x}/\varepsilon)$, where $\varepsilon >0$.

Let $\mathcal{O}\subset\mathbb{R}^d$ be a bounded domain of class $C^{1,1}$. In $L_2(\mathcal{O};\mathbb{C}^n)$ we study a wide class of matrix strongly elliptic operators $A_{D,\varepsilon}$ given by the differential expression $b(\mathbf{D})^*g^\varepsilon (\mathbf{x})b(\mathbf{D})$, $\varepsilon >0$, with the Dirichlet boundary condition. Here $g$ is a Hermitian matrix-valued function
in $\mathbb{R}^d$ (of size $m\times m$), positive definite and periodic with respect to the lattice $\Gamma$. The operator $b(\mathbf{D})$ is an $(m\times n)$-matrix
first order DO with constant coefficients. It is assumed that $m>n$; the symbol of $b(\mathbf{D})$ has maximal rank. This condition ensures strong ellipticity of the operator $A_{D,\varepsilon}$.

The simplest example of the operator under consideration is the acoustics 
operator $-\mathrm{div}\,g^\varepsilon(\mathbf{x})\nabla$. The operator of elasticity theory also can be written in the required form. These and other examples are considered in
\cite{BSu} in detail.

Let $A_D^0$ be the effective operator $b(\mathbf{D})^*g^0b(\mathbf{D})$ defined on $H^2(\mathcal{O};\mathbb{C}^n)\cap H^1_0(\mathcal{O};\mathbb{C}^n)$. Here $g^0$ is the constant positive definite effective matrix.

\subsection{Known results in a bounded domain}

The broad literature is devoted to the operator error estimates in homogenization. In the present subsection, we concentrate on problems in a bounded domain.

Operator error estimates for the Dirichlet problems for second order elliptic equations in a bounded domain with sufficiently smooth boundary
were studied by many authors. Apparently, the first result is due to Sh. Moskow and M. Vogelius who proved an estimate
\begin{equation}
\label{Resolvent L2}
\Vert A_{D,\varepsilon}^{-1}-(A_D^0)^{-1}\Vert _{L_2(\mathcal{O})\rightarrow L_2(\mathcal{O})}\leqslant C\varepsilon,
\end{equation}
see \cite[Corollary 2.2]{MoV}. Here the operator $A_{D,\varepsilon}$ acts in $L_2(\mathcal{O})$, where $\mathcal{O}\subset\mathbb{R}^2$, and is given by $-\div g^\varepsilon (\mathbf{x}) \nabla$ with 
the Dirichlet condition on $\partial\mathcal{O}$. The matrix-valued function $g$ is assumed to be infinitely smooth.

For arbitrary dimension, homogenization problems in a bounded domain with sufficiently
smooth boundary were studied in \cite{Zh1, Zh2}, and \cite{ZhPas1}. The acoustics and elasticity operators with the Dirichlet or Neumann boundary conditions and without any smoothness assumptions on coefficients were considered.  The analog
of estimate \eqref{Resolvent L2}, but of order $O(\sqrt{\varepsilon})$, was obtained.  (In the case of the Dirichlet problem for
the acoustics equation, the $(L_2\rightarrow L_2)$-estimate was improved in \cite{ZhPas1}, but the order was
not sharp.) Similar results for the operator $-\mathrm{div}g^\varepsilon(\mathbf{x})\nabla$ in a smooth bounded domain $\mathcal{O}\subset\mathbb{R}^d$  
with the Dirichlet or Neumann boundary conditions were obtained by G. Griso \cite{Gr1, Gr2} with
the help of the “unfolding” method. In \cite{Gr2}, sharp-order estimate \eqref{Resolvent L2} (for the same operator) was proven. For elliptic systems similar results were independently obtained in \cite{KeLiSh}
and in \cite{PSu, SuL2}. Further results and a detailed survey can be found in \cite{Su3, Su4}.

The $(L_2\rightarrow  L_2)$-approximation for the parabolic semigroup $e^{-tA_{D,\varepsilon}}$ was proven in \cite{MSu_parabol}.

The first initial-boundary value problem for the hyperbolic systems were studied in \cite{M1}. By using the inverse Laplace transformation and a known result on homogenization of the resolvent in dependence on the spectral parameter, for the operator including the lower order terms it was obtained that
\begin{align}
\label{old cos}
&\left\Vert\left( \cos(tA_{D,\varepsilon}^{1/2})-\cos (t(A_D^0)^{1/2})\right)(A_D^0)^{-2}\right\Vert _{L_2(\mathcal{O})\rightarrow L_2(\mathcal{O})}\leqslant C\varepsilon (1+\vert t\vert ^5),
\\
\label{old sin}
\begin{split}
&\left\Vert\left( A_{D,\varepsilon}^{-1/2}\sin (tA_{D,\varepsilon}^{1/2})-(A_D^0)^{-1/2}\sin (t(A_D^0)^{1/2})\right)(A_D^0)^{-2}\right\Vert _{L_2(\mathcal{O})\rightarrow L_2(\mathcal{O})}
\\
&\leqslant C\varepsilon \vert t\vert (1+\vert t\vert ^5).
\end{split}
\end{align}
According to the known results in the whole space $\mathbb{R}^d
$, see \cite{BSu08,M2,DSu1}, these estimates do not look optimal with respect to the type of the norm and to the rate of growth with respect to the time $t$.

\subsection{Main results}

The main result of the paper is the improvement of estimates \eqref{old cos}, \eqref{old sin} with respect to the time growth and to the type of the operator norm: for $0<\varepsilon\leqslant 1$ and $t\in\mathbb{R}$,
\begin{align}
\label{intr cos}
&\left\Vert  \cos(tA_{D,\varepsilon}^{1/2})-\cos (t(A_D^0)^{1/2})\right\Vert _{H^2(\mathcal{O})\cap H^1_0(\mathcal{O})\rightarrow L_2(\mathcal{O})}\leqslant C\varepsilon (1+\vert t\vert),
\\
\label{intr sin}
&\left\Vert A_{D,\varepsilon}^{-1/2}\sin (tA_{D,\varepsilon}^{1/2})-(A_D^0)^{-1/2}\sin (t(A_D^0)^{1/2})\right\Vert _{H^1_0(\mathcal{O})\rightarrow L_2(\mathcal{O})}\leqslant C\varepsilon (1+\vert t\vert).
\end{align}
Here the space $H^2(\mathcal{O};\mathbb{C}^n)\cap H^1_0(\mathcal{O};\mathbb{C}^n)$ is equipped with the $H^2$-norm. 
For the operators, acting in $\mathbb{R}^d$, in \cite{DSu1} it was shown that estimates of the form \eqref{intr cos}, \eqref{intr sin} are optimal with respect to time $t$ and to the type of the operator norm in the general case. While in $\mathbb{R}^d$ it is possible to refine such estimates with respect to the type of the operator norm under additional assumptions on the operator (see \cite{DSu1}), for the problems in a bounded domain such a~refinement was not obtained in the present paper. 

Note that the operators $(A_D^0)^{-1}:L_2(\mathcal{O};\mathbb{C}^n)\rightarrow H^2(\mathcal{O};\mathbb{C}^n)\cap H^1_0(\mathcal{O};\mathbb{C}^n)$ and $(A_D^0)^{-1/2}:L_2(\mathcal{O};\mathbb{C}^n)\rightarrow H^1_0(\mathcal{O};\mathbb{C}^n)$ are isomorphisms, so estimates \eqref{intr cos}, \eqref{intr sin} can be reformulated as
\begin{align*}
&\left\Vert \left( \cos(tA_{D,\varepsilon}^{1/2})-\cos (t(A_D^0)^{1/2})\right)(A_D^0)^{-1}\right\Vert _{L_2(\mathcal{O})\rightarrow L_2(\mathcal{O})}\leqslant C\varepsilon (1+\vert t\vert),
\\
\begin{split}
&\left\Vert\left( A_{D,\varepsilon}^{-1/2}\sin (tA_{D,\varepsilon}^{1/2})-(A_D^0)^{-1/2}\sin (t(A_D^0)^{1/2})\right)(A_D^0)^{-1/2}\right\Vert _{L_2(\mathcal{O})\rightarrow L_2(\mathcal{O})}\\&\leqslant C\varepsilon (1+\vert t\vert).
\end{split}
\end{align*}

The $L_2$-operator error estimate for 
 homogenization of the solution to the first initial-boundary value problem for the hyperbolic system is also obtained.

\subsection{Method} The proof is a modification of the method of \cite{PSu,SuL2}. Consider solution $\mathbf{u}_\varepsilon$ of the first initial-boundary value problem for the hyperbolic equation $(\partial _t^2+A_\varepsilon)\mathbf{u}_\varepsilon =\mathbf{F}$ and the solution $\mathbf{u}_0$ of the corresponding effective problem. Introduce the first order approximation $\mathbf{v}_\varepsilon =\mathbf{u}_0+\varepsilon K(\varepsilon)\mathbf{u}_0$ to the solution, where the term $K(\varepsilon)\mathbf{u}_0$ is the corrector, and $\Vert \varepsilon K(\varepsilon)\mathbf{u}_0\Vert _{L_2(\mathcal{O})}=O(\varepsilon)$. The function $K(\varepsilon)\mathbf{u}_0$ does not satisfy the Dirichlet boundary condition, so we consider the corresponding boundary layer discrepancy $\mathbf{w}_\varepsilon$ and  estimate the difference $\mathbf{w}_\varepsilon-\varepsilon K(\varepsilon)\mathbf{u}_0$ in $L_2$. This estimation crucially relies on the $L_2$-boundedness of the operator $A_{D,\varepsilon}^{-1}A_\varepsilon$. To estimate $\Vert \mathbf{u}_\varepsilon -\mathbf{v}_\varepsilon +\mathbf{w}_\varepsilon\Vert_{L_2(\mathcal{O})}$ we use the approximation
\begin{equation}
\label{aux intr}
\Vert A_\varepsilon (I+\varepsilon K(\varepsilon))-A^0\Vert _{H^2(\mathcal{O})\cap H^1_0(\mathcal{O})\rightarrow\ H^{-1}(\mathcal{O})}\leqslant C\varepsilon ,\quad0<\varepsilon\leqslant 1,
\end{equation}
which is a direct consequence of \cite[Lemma~7.3]{PSu}.

\subsection{Plan of the paper} The paper consists of two sections and Introduction. In Section~\ref{Section Class of Operators}, we define the class of operators $A_{D,\varepsilon}$, introduce the effective operator $A_D^0$, and formulate the known auxiliary result \eqref{aux intr}. In Section~\ref{Section Hyperbolic systems}, we formulate and prove the main results of the paper. 

\subsection{Notation} Let $\mathfrak{H}$ and $\mathfrak{H}_\bullet$ be complex separable Hilbert spaces. The symbols $(\cdot ,\cdot)_\mathfrak{H}$ and $\Vert \cdot\Vert _\mathfrak{H}$ denote the inner product and the norm in   $\mathfrak{H}$, respectively; the symbol $\Vert \cdot\Vert _{\mathfrak{H}\rightarrow\mathfrak{H}_\bullet}$ means the norm of the linear continuous operators from $\mathfrak{H}$ to $\mathfrak{H}_\bullet$.

The symbols $\langle \cdot ,\cdot\rangle$ and $\vert \cdot\vert$ stand for the inner product and the norm in   $\mathbb{C}^n$, respectively, $\mathbf{1}_n$ is the identity $(n\times n)$-matrix. If $a$ is an~$(m\times n)$-matrix, then the symbol $\vert a\vert$ denotes the norm of the matrix $a$ as the operator from  $\mathbb{C}^n$ to $\mathbb{C}^m$. 

We use the notation $\mathbf{x}=(x_1,\dots , x_d)\in\mathbb{R}^d$, $iD_j=\partial _j =\partial /\partial x_j$, $j=1,\dots,d$, $\mathbf{D}=-i\nabla=(D_1,\dots ,D_d)$. The classes $L_p$ of vector-valued functions in a domain $\mathcal{O}\subset\mathbb{R}^d$ with values in $\mathbb{C}^n$ are denoted by $L_p(\mathcal{O};\mathbb{C}^n)$,\break $1\leqslant p\leqslant \infty$. The Sobolev spaces of $\mathbb{C}^n$-valued functions in a domain $\mathcal{O}\subset\mathbb{R}^d$ are denoted by $H^s(\mathcal{O};\mathbb{C}^n)$. 
For $n=1$, we simply write  $L_p(\mathcal{O})$, $H^s(\mathcal{O})$ and so on, but, sometimes, if this does not lead to confusion, we use such simple notation for the spaces of vector-valued or matrix-valued functions. The symbol $L_p((0,T);\mathfrak{H})$, $1\leqslant p\leqslant\infty$, denotes the $L_p$-space of $\mathfrak{H}$-valued functions on the interval $(0,T)$.

By $C$ and $c$ (possibly, with indices and marks) we denote various constants in estimates.

\section{Class of the operators}
\label{Section Class of Operators}

\subsection{Lattice in $\mathbb{R}^d$}
\label{Subsection lattices}
Let $\Gamma \subset \mathbb{R}^d$ be a lattice generated by a basis  $\mathbf{a}_1,\dots ,\mathbf{a}_d \in \mathbb{R}^d$, i.~e.,
$$
\Gamma =\left\lbrace
\mathbf{a}\in \mathbb{R}^d : \mathbf{a}=\sum _{j=1}^d \nu _j \mathbf{a}_j,\; \nu _j\in \mathbb{Z}
\right\rbrace ,
$$
and let $\Omega$ be the elementary cell of $\Gamma$:
$$
\Omega =
\left \lbrace
\mathbf{x}\in \mathbb{R}^d :\mathbf{x}=\sum _{j=1}^d \tau _j \mathbf{a}_j , \; -\frac{1}{2}<\tau _j<\frac{1}{2}
\right\rbrace .
$$
By $\vert \Omega \vert $ we denote the Lebesgue measure of $\Omega$: $\vert \Omega \vert =\mathrm{mea
s}\,\Omega$.

If $f(\mathbf{x})$ is a $\Gamma$-periodic function in $\mathbb{R}^d$, we denote
$$f^\varepsilon (\mathbf{x}):=f(\varepsilon ^{-1}\mathbf{x}),\quad \varepsilon >0.$$

\subsection{Operator $A_{D,\varepsilon}$}
\label{Subsection A_D,eps}
Let $\mathcal{O}\subset\mathbb{R}^d$ be a bounded domain of
class $C^{1,1}$. 
In $L_2(\mathcal{O};\mathbb{C}^n)$, we consider the operator $A_{D,\varepsilon}$ formally given by the differential expression  $A_\varepsilon =
b(\mathbf{D})^*g^\varepsilon (\mathbf{x})b(\mathbf{D})$ with the Dirichlet boundary condition. Here $g(\mathbf{x})$ is a $\Gamma$-periodic $(m\times m)$-matrix-valued function (in general, with complex entries). We assume that $g(\mathbf{x})>0$ and $g,g^{-1}\in L_\infty (\mathbb{R}^d)$. Next, $b(\mathbf{D})$ is the differential operator given  by
\begin{equation}
\label{b(D)=}
b(\mathbf{D})=\sum _{j=1}^d b_jD_j,
\end{equation}
where $b_j$, $j=1,\dots ,d$, are constant $(m\times n)$-matrices (in general, with complex entries). It is assumed that  $m\geqslant n$ and that the symbol $b(\boldsymbol{\xi})=\sum _{j=1}^d b_j\xi_j$ of the operator $b(\mathbf{D})$ has maximal rank:
$$
\mathrm{rank}\,b(\boldsymbol{\xi})=n,\quad 0\neq \boldsymbol{\xi}\in\mathbb{R}^d.
$$
This condition is equivalent to the estimates
\begin{equation}
\label{<b^*b<}
\alpha _0\mathbf{1}_n \leqslant b(\boldsymbol{\theta})^*b(\boldsymbol{\theta})
\leqslant \alpha _1\mathbf{1}_n,\quad
\boldsymbol{\theta}\in \mathbb{S}^{d-1},\quad
0<\alpha _0\leqslant \alpha _1<\infty,
\end{equation}
with some positive constants $\alpha _0$ and $\alpha _1$. So, 
\begin{equation}
\label{b_l<=}
\vert b_l\vert\leqslant \alpha _1^{1/2}.
\end{equation}

The precise definition of the operator $A_{D,\varepsilon}$ is given in terms of the quadratic form
\begin{equation*}
a_{D,\varepsilon}[\mathbf{u},\mathbf{u}]=\int _\mathcal{O}\langle g^\varepsilon b(\mathbf{D})\mathbf{u},b(\mathbf{D})\mathbf{u}\rangle\,d\mathbf{x},\quad \mathbf{u}\in H^1_0(\mathcal{O};\mathbb{C}^n).
\end{equation*}
This form is closed and positive definite. Indeed, extending
$\mathbf{u}$ by zero to $\mathbb{R}^d\setminus \mathcal{O}$, using the Fourier transformation and taking \eqref{<b^*b<} into
account, it is easy to check that
\begin{equation*}
\alpha _0\Vert g^{-1}\Vert _{L_\infty}\int _\mathcal{O}\vert \mathbf{D}\mathbf{u}\vert ^2\,d\mathbf{x}\leqslant a_{D,\varepsilon}[\mathbf{u},\mathbf{u}]
\leqslant \alpha _1\Vert g\Vert _{L_\infty}\int _\mathcal{O}\vert \mathbf{D}\mathbf{u}\vert ^2\,d\mathbf{x},
\end{equation*}
$\mathbf{u}\in H^1_0(\mathcal{O};\mathbb{C}^n)$. 
It remains to note that due to the Friedrichs's inequality the functional $\Vert \mathbf{D}\mathbf{u}\Vert _{L_2(\mathcal{O})}$  determines the norm in $H^1(\mathcal{O};\mathbb{C}^n)$ equivalent to the standard one. We have
\begin{equation}
\label{a_eps>=H1norm}
\begin{split}
\Vert A_{D,\varepsilon}^{1/2}\mathbf{u}\Vert _{L_2(\mathcal{O})}&\geqslant 2^{-1/2}(1+(\mathrm{diam}\,\mathcal{O})^{-2})^{1/2}\alpha _0^{1/2}\Vert g^{-1}\Vert _{L_\infty}^{1/2}\Vert \mathbf{u}\Vert _{H^1(\mathcal{O})}
\\
&=: c_*^{-1}\Vert \mathbf{u}\Vert _{H^1(\mathcal{O})},\quad \mathbf{u}\in H^1_0(\mathcal{O};\mathbb{C}^n).
\end{split}
\end{equation}
Hence,
\begin{equation}
\label{ADeps-1/2 L2 to H1}
\Vert A_{D,\varepsilon}^{-1/2}\Vert _{L_2(\mathcal{O})\rightarrow H^1(\mathcal{O})}
=
\Vert A_{D,\varepsilon}^{-1/2}\Vert _{H^{-1}(\mathcal{O})\rightarrow L_2(\mathcal{O})}
\leqslant c_*.
\end{equation}

\begin{lemma}
\label{Lemma ADeps -1 Aeps}
The operator $A_{D,\varepsilon}^{-1}A_\varepsilon$ is bounded in $L_2(\mathcal{O};\mathbb{C}^n)$ and 
\begin{equation}
\label{Lm ADeps -1 Aeps}
\Vert A_{D,\varepsilon}^{-1}A_\varepsilon \Vert _{L_2(\mathcal{O})\rightarrow L_2(\mathcal{O})}\leqslant 1.
\end{equation}
\end{lemma}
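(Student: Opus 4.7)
The plan is to obtain the bound by a Cauchy--Schwarz argument built around the self-adjointness of $A_{D,\varepsilon}$ on $L_2(\mathcal O;\mathbb C^n)$. Given $\u\in L_2(\mathcal O;\mathbb C^n)$, I set $\v:=A_{D,\varepsilon}^{-1}A_\varepsilon\u$ and introduce the auxiliary element $\w:=A_{D,\varepsilon}^{-1}\v$, which belongs to $\Dom(A_{D,\varepsilon})\subset H^1_0(\mathcal O;\mathbb C^n)$ and satisfies $A_{D,\varepsilon}\w=\v$. Using that $A_{D,\varepsilon}^{-1}$ is a bounded self-adjoint operator on $L_2$, one has
\begin{equation*}
\|\v\|_{L_2(\mathcal O)}^2 = (A_{D,\varepsilon}^{-1}A_\varepsilon\u,\v)_{L_2} = (A_\varepsilon\u,A_{D,\varepsilon}^{-1}\v)_{L_2} = (A_\varepsilon\u,\w)_{L_2}.
\end{equation*}

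The next step is to move $A_\varepsilon$ from $\u$ onto $\w$ via a Dirichlet Green identity. Because the matrix $g$ is Hermitian, the form $a_{D,\varepsilon}$ is Hermitian, and because $\w\in\Dom(A_{D,\varepsilon})$ has vanishing Dirichlet trace, the boundary contributions that would normally arise from integration by parts cancel. This yields
\begin{equation*}
(A_\varepsilon\u,\w)_{L_2} = (\u,A_\varepsilon\w)_{L_2} = (\u,A_{D,\varepsilon}\w)_{L_2} = (\u,\v)_{L_2},
\end{equation*}
so that $\|\v\|_{L_2}^2 = (\u,\v)_{L_2}$. Cauchy--Schwarz then produces $\|\v\|_{L_2}\le\|\u\|_{L_2}$, which is precisely \eqref{Lm ADeps -1 Aeps}.

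The main obstacle is giving a clean meaning to $A_\varepsilon\u$ when $\u\in L_2$ has no boundary regularity: the differential expression $b(\mathbf D)^* g^\varepsilon b(\mathbf D)$ only produces a distribution of order two from such $\u$ in general, and a purely distributional reading makes the Green identity fail. The natural resolution is to interpret $A_\varepsilon\u$ as the element of $\Dom(A_{D,\varepsilon})^*$ defined by the pairing $\langle A_\varepsilon\u,\bpsi\rangle:=(\u,A_{D,\varepsilon}\bpsi)_{L_2}$ for $\bpsi\in\Dom(A_{D,\varepsilon})$, and to extend $A_{D,\varepsilon}^{-1}$ by duality so that $\v\in L_2$. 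This interpretation builds the Dirichlet Green identity into the definition and makes the operator $A_{D,\varepsilon}^{-1}A_\varepsilon$ a bounded map on $L_2$. Equivalently, since $A_{D,\varepsilon}^{-1}A_\varepsilon$ reduces to the identity on the dense subspace $\Dom(A_{D,\varepsilon})$---where the estimate is trivial---one can also derive the bound by continuous extension, which is the shortest route but hides the mechanism used in the computation above.
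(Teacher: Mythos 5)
Your closing paragraph is, in substance, the paper's own proof: the paper takes $\mathbf{f},\mathbf{h}\in C_0^\infty(\mathcal{O};\mathbb{C}^n)$, uses the self-adjointness of $A_{D,\varepsilon}^{-1}$ and the definition of the quadratic form to compute $(A_{D,\varepsilon}^{-1}A_\varepsilon\mathbf{f},\mathbf{h})_{L_2(\mathcal{O})}=(A_{D,\varepsilon}^{1/2}\mathbf{f},A_{D,\varepsilon}^{-1/2}\mathbf{h})_{L_2(\mathcal{O})}=(\mathbf{f},\mathbf{h})_{L_2(\mathcal{O})}$, and then extends by density in $\mathbf{h}$ and continuity in $\mathbf{f}$. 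That is exactly your ``shortest route'' through a dense subspace on which $A_{D,\varepsilon}^{-1}A_\varepsilon$ acts as the identity, and your duality pairing $\langle A_\varepsilon\mathbf{u},\boldsymbol{\psi}\rangle:=(\mathbf{u},A_{D,\varepsilon}\boldsymbol{\psi})_{L_2(\mathcal{O})}$ names the same extension. So your conclusion, and the way you ultimately secure it, agree with the paper; note that the paper only ever pairs against elements of $H^1_0(\mathcal{O};\mathbb{C}^n)$, so it integrates by parts once and no boundary terms appear at all.

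What does not stand as written is the Green-identity step in your second paragraph. The vanishing Dirichlet trace of $\mathbf{w}$ does not give $(A_\varepsilon\mathbf{u},\mathbf{w})_{L_2}=(\mathbf{u},A_\varepsilon\mathbf{w})_{L_2}$: integrating by parts twice produces two boundary integrals, and $\mathbf{w}\vert_{\partial\mathcal{O}}=0$ kills only the one containing the trace of $\mathbf{w}$; the other pairs the (in general nonzero) trace of $\mathbf{u}$ with the conormal flux $b(\boldsymbol{\nu})^*g^\varepsilon b(\mathbf{D})\mathbf{w}$ and survives. This is not a removable technicality, because under the distributional reading of $A_\varepsilon\mathbf{u}$ the bound \eqref{Lm ADeps -1 Aeps} is simply false for functions with nonzero boundary values: take $d=n=m=1$, $\mathcal{O}=(0,1)$, $g\equiv 1$, so $A=-d^2/dx^2$ with Dirichlet realization $A_D$, and $\mathbf{u}_\tau(x)=(1-x/\tau)_+$; a direct check of the weak formulation gives $A_D^{-1}A\mathbf{u}_\tau=\mathbf{u}_\tau-1+x$, whence $\Vert A_D^{-1}A\mathbf{u}_\tau\Vert_{L_2(0,1)}\geqslant 3^{-1/2}(1-\tau)^{3/2}$ while $\Vert\mathbf{u}_\tau\Vert_{L_2(0,1)}=(\tau/3)^{1/2}\to 0$. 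Hence the identity $(A_\varepsilon\mathbf{u},\mathbf{w})=(\mathbf{u},A_{D,\varepsilon}\mathbf{w})$ holds only because you adopt it as the definition of $A_\varepsilon\mathbf{u}$ (equivalently, because you work with the closure from a dense subspace, on which the operator is the identity), and once that is done the Cauchy--Schwarz computation, while correct, no longer carries the weight of the proof. The rigorous content of your argument is therefore entirely in your last paragraph --- which coincides with the paper's argument --- and the second paragraph should be presented as motivation, not as a derivation.
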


\begin{proof}
Let $\mathbf{f},\mathbf{h}\in C^\infty _0(\mathcal{O};\mathbb{C}^n)$. We have
\begin{equation*}
\begin{split}
&(A_{D,\varepsilon}^{-1}A_\varepsilon \mathbf{f},\mathbf{h})_{L_2(\mathcal{O})}=(A_\varepsilon\mathbf{f},A_{D,\varepsilon}^{-1}\mathbf{h})_{L_2(\mathcal{O})}
\\
&=\left( (g^\varepsilon)^{1/2}b(\mathbf{D})\mathbf{f},(g^\varepsilon)^{1/2}b(\mathbf{D})A_{D,\varepsilon}^{-1}\mathbf{h}\right))_{L_2(\mathcal{O})}
=(A_{D,\varepsilon}^{1/2}\mathbf{f},A_{D,\varepsilon}^{1/2}A_{D,\varepsilon}^{-1}\mathbf{h})_{L_2(\mathcal{O})}
\\
&=(A_{D,\varepsilon}^{1/2}\mathbf{f},A_{D,\varepsilon}^{-1/2}\mathbf{h})_{L_2(\mathcal{O})}=(\mathbf{f},\mathbf{h})_{L_2(\mathcal{O})}.
\end{split}
\end{equation*}
So,
\begin{equation*}
\vert (A_{D,\varepsilon}^{-1}A_\varepsilon \mathbf{f},\mathbf{h})_{L_2(\mathcal{O})}\vert \leqslant \Vert \mathbf{f}\Vert _{L_2(\mathcal{O})}\Vert \mathbf{h}\Vert _{L_2(\mathcal{O})}.
\end{equation*}
Since $\mathbf{h}$ belongs to the set $C_0^\infty(\mathcal{O};\mathbb{C}^n)$ which is dense in $L_2(\mathcal{O};\mathbb{C}^n)$, by continuity, 
\begin{equation*}
\Vert A_{D,\varepsilon}^{-1}A_\varepsilon \mathbf{f}\Vert _{L_2(\mathcal{O})}\leqslant \Vert \mathbf{f}\Vert _{L_2(\mathcal{O})}, \quad\mathbf{f}\in C_0^\infty (\mathcal{O};\mathbb{C}^n).
\end{equation*}
By continuity, this inequality is valid for any $\mathbf{f}\in L_2(\mathcal{O};\mathbb{C}^n)$. We arrive at estimate \eqref{Lm ADeps -1 Aeps}.
\end{proof}

\subsection{The effective operator}
\label{Subsection Effective operator}

Suppose that a $\Gamma$-periodic $(n\times m)$-matrix-valued function $\Lambda (\mathbf{x})$ is the (weak) solution of the problem
\begin{equation*}
b(\mathbf{D})^*g(\mathbf{x})(b(\mathbf{D})\Lambda (\mathbf{x})+\mathbf{1}_m)=0,\quad \int _{\Omega }\Lambda (\mathbf{x})\,d\mathbf{x}=0.
\end{equation*}
As $\Lambda$ is the weak solution, its $H^1(\Omega)$-norm is bounded. We will use estimate
\begin{align}
\label{Lambda <=}
\Vert \Lambda \Vert _{L_2(\Omega)}&\leqslant \vert \Omega \vert ^{1/2}M.
\end{align}
The constant $M$ can be written explicitly (see \cite[Subsection~7.3]{BSu05}) and depends only on $m$, $\alpha_0$, $\Vert g\Vert _{L_\infty}$, $\Vert g^{-1}\Vert _{L_\infty}$, and the parameters of the lattice~$\Gamma$.

The effective matrix is given by
\begin{equation*}
g^0=\vert \Omega \vert ^{-1}\int _{\Omega} g(\mathbf{x})(b(\mathbf{D})\Lambda (\mathbf{x})+\mathbf{1}_m)\,d\mathbf{x}.
\end{equation*}
It can be checked that $g^0$ is positive definite. Due to the Voight-Reuss bracketing (see, e.~g., \cite[Chapter~3, Theorem~1.5]{BSu}), the matrix $g^0$ satisfy estimates
\begin{equation}
\label{g0 estimates}
\vert g^0\vert \leqslant \Vert g\Vert _{L_\infty},\quad \vert (g^0)^{-1}\vert \leqslant\Vert g^{-1}\Vert _{L_\infty}.
\end{equation}

The effective operator $A_D^0$ for $A_{D,\varepsilon}$ is given by the differential expression 
\begin{equation}
\label{AD0}
A^0=b(\mathbf{D})^*g^0b(\mathbf{D})
\end{equation}
with the Dirichlet condition on $\partial\mathcal{O}$. The domain of this operator coincides with $ H^2(\mathcal{O};\mathbb{C}^n)\cap H^1_0(\mathcal{O};\mathbb{C}^n)$. Indeed, the operator \eqref{AD0} is strongly elliptic and 
due to the assumption $\partial\mathcal{O}\in C^{1,1}$, we can apply the ”additional smoothness”
theorems for solutions of strongly elliptic systems (see, e. g., \cite[Chapter
4]{McL}). Thus, $(A_D^0)^{-1}$ is a continuous operator from $L_2(\mathcal{O};\mathbb{C}^n)$ to $H^2(\mathcal{O};\mathbb{C}^n)$:
\begin{equation}
\label{AD0-1toH2}
\Vert (A_D^0)^{-1}\Vert _{L_2(\mathcal{O})\rightarrow H^2(\mathcal{O})}\leqslant \widehat{c}.
\end{equation}
The constant $\widehat{c}$ depends only on $\alpha _0$, $\alpha_1$, $\Vert g\Vert _{L_\infty}$, $\Vert g^{-1}\Vert _{L_\infty}$, the parameters of the lattice $\Gamma$, and the domain $\mathcal{O}$. 

\begin{remark}
Instead of the condition $\partial \mathcal{O}\in C^{1,1}$ one can impose the following implicit 
condition: a bounded domain $\mathcal{O}\subset\mathbb{R}^d$ with Lipschitz boundary is such that estimate \eqref{AD0-1toH2} 
holds. The results of the paper remain true for such domain. In the case of the scalar elliptic
operators, wide sufficient conditions on $\partial\mathcal{O}$ ensuring \eqref{AD0-1toH2} can be found in \textnormal{\cite{KoE}} and \textnormal{\cite[\textit{Chapter}~7]{MaSh}} \textnormal{(}in particular, it suffices that $\partial \mathcal{O}\in C^\alpha$ for $\alpha >3/2$\textnormal{)}.
\end{remark}

Similarly to \eqref{a_eps>=H1norm}, by \eqref{<b^*b<} and \eqref{g0 estimates},
\begin{equation*}
\Vert (A_D^0)^{1/2}\mathbf{u}\Vert _{L_2(\mathcal{O})}\geqslant c_*^{-1}\Vert \mathbf{u}\Vert _{H^1(\mathcal{O})},\quad\mathbf{u}\in H^1_0(\mathcal{O};\mathbb{C}^n).
\end{equation*}
Hence,
\begin{equation}
\label{1.10a}
\Vert (A_D^0)^{-1/2}\Vert _{L_2(\mathcal{O})\rightarrow H^1(\mathcal{O})}\leqslant c_*.
\end{equation}

\begin{lemma}
\label{Lemma AD0h}
For any $\mathbf{h}\in  H^2(\mathcal{O};\mathbb{C}^n)\cap H^1_0(\mathcal{O};\mathbb{C}^n)$ we have
\begin{equation*}
\Vert A_D^0\mathbf{h}\Vert _{L_2(\mathcal{O})}\leqslant \alpha _1d\Vert g\Vert _{L_\infty}\Vert \mathbf{D}^2 \mathbf{h}\Vert _{L_2(\mathcal{O})}.
\end{equation*}
\end{lemma}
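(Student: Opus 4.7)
The plan is direct algebraic manipulation using the definition \eqref{AD0} and the pointwise bounds on the matrices $b_j$ and $g^0$ that have already been recorded in the excerpt.

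First I would expand the differential expression. Writing $b(\mathbf{D}) = \sum_{j=1}^d b_j D_j$ from \eqref{b(D)=}, I get
\[
A^0\mathbf{h} = b(\mathbf{D})^*g^0b(\mathbf{D})\mathbf{h} = \sum_{j,k=1}^d b_k^* g^0 b_j\, D_kD_j\mathbf{h}.
\]
Here I use that $g^0$ is a constant matrix, so it commutes with $D_k$ and no derivatives of coefficients appear. Since $\mathbf{h}\in H^2(\mathcal{O};\mathbb{C}^n)$, the mixed second derivatives $D_kD_j\mathbf{h}$ lie in $L_2(\mathcal{O};\mathbb{C}^n)$ and the right-hand side makes classical sense.

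Next I would estimate pointwise. The matrix norms of the factors $b_k^*g^0 b_j$ satisfy
\[
|b_k^* g^0 b_j| \leq |b_k|\,|g^0|\,|b_j| \leq \alpha_1\Vert g\Vert_{L_\infty},
\]
by \eqref{b_l<=} and \eqref{g0 estimates}. Therefore for a.e.\ $\mathbf{x}\in\mathcal{O}$,
\[
|A^0\mathbf{h}(\mathbf{x})| \leq \alpha_1\Vert g\Vert_{L_\infty}\sum_{j,k=1}^d |D_kD_j\mathbf{h}(\mathbf{x})|.
\]

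Finally I would apply the Cauchy--Schwarz inequality on the finite sum of $d^2$ terms to pass from the $\ell^1$-norm of the Hessian entries to the $\ell^2$-norm which is $|\mathbf{D}^2\mathbf{h}|$:
\[
\sum_{j,k=1}^d |D_kD_j\mathbf{h}(\mathbf{x})| \leq d\Bigl(\sum_{j,k=1}^d |D_kD_j\mathbf{h}(\mathbf{x})|^2\Bigr)^{1/2} = d\,|\mathbf{D}^2\mathbf{h}(\mathbf{x})|.
\]
Integrating the square of the resulting pointwise bound over $\mathcal{O}$ yields the claimed inequality. There is no genuine obstacle; the only thing to be careful about is keeping the constant sharp, i.e.\ getting the factor $d$ rather than $d^2$, which is precisely what Cauchy--Schwarz provides.
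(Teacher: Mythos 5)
Your proof is correct and uses essentially the same ingredients as the paper's own argument: the bounds $|b_l|\leqslant\alpha_1^{1/2}$ from \eqref{b_l<=}, $|g^0|\leqslant\Vert g\Vert_{L_\infty}$ from \eqref{g0 estimates}, and the Cauchy--Schwarz inequality to convert the sum over second derivatives into $\Vert\mathbf{D}^2\mathbf{h}\Vert_{L_2(\mathcal{O})}$, arriving at the same constant $\alpha_1 d\Vert g\Vert_{L_\infty}$. The only cosmetic difference is that you expand $A^0$ into the full double sum and estimate pointwise before integrating, whereas the paper keeps $b(\mathbf{D})\mathbf{h}$ factored and applies Cauchy--Schwarz twice (each time contributing $\alpha_1^{1/2}d^{1/2}$); the two computations are equivalent.
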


\begin{proof}
By \eqref{b(D)=}, \eqref{b_l<=}, and \eqref{g0 estimates}, 
\begin{equation*}
\begin{split}
\Vert A_D^0\mathbf{h}\Vert _{L_2(\mathcal{O})}
&\leqslant \sum _{l=1}^d\Vert b_l^*D_lg^0 b(\mathbf{D})\mathbf{h}\Vert _{L_2(\mathcal{O})}
\leqslant \left(\sum _{l=1}^d \vert b_l\vert ^2\right)^{1/2}\Vert\mathbf{D}g^0b(\mathbf{D})\mathbf{h}\Vert _{L_2(\mathcal{O})}
\\
&\leqslant \alpha _1^{1/2}d^{1/2}\Vert g\Vert _{L_\infty}\Vert b(\mathbf{D})\mathbf{D}\mathbf{h}\Vert _{L_2(\mathcal{O})}
\leqslant \alpha _1d\Vert g\Vert _{L_\infty}\Vert \mathbf{D}^2\mathbf{h}\Vert _{L_2(\mathcal{O})}.
\end{split}
\end{equation*}
\end{proof}

\subsection{Steklov smoothing operator}

Let $S_\varepsilon$ be the Steklov smoothing (or Steklov averaging) operator \cite{St} in $L_2(\mathbb{R}^d;\mathbb{C}^m)$:
\begin{equation*}
(S_\varepsilon \mathbf{u})(\mathbf{x})=\vert \Omega\vert ^{-1}\int _\Omega \mathbf{u}(\mathbf{x}-\varepsilon\mathbf{z})\,d\mathbf{z}.
\end{equation*}

We need the following property of the operator $S_\varepsilon$ (see \cite{ZhPas1} or \cite[Proposition~3.2]{PSu}).

\begin{proposition}
\label{Proposition f Pi on H-kappa}
Let $f$ be a $\Gamma$-periodic function in $\mathbb{R}^d$ such that $f\in L_2(\Omega)$. Let $[f^\varepsilon]$ be the operator of multiplication by the function $f^\varepsilon(\mathbf{x})$.  Then
\begin{equation*}
\Vert [f^\varepsilon]S _\varepsilon \Vert _{L_2(\mathbb{R}^d)\rightarrow L_2(\mathbb{R}^d)}\leqslant\vert \Omega\vert ^{-1/2}\Vert f\Vert _{L_2(\Omega)},\quad \varepsilon >0.
\end{equation*}
\end{proposition}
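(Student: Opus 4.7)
The plan is to reduce the operator norm bound to a pointwise-in-$\x$ Cauchy--Schwarz estimate, followed by Fubini and a change of variables exploiting the $\Gamma$-periodicity of $f$.

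First, for $\u\in L_2(\R^d;\C^m)$, I will apply Cauchy--Schwarz inside the averaging integral to get the pointwise bound
\begin{equation*}
|(S_\varepsilon\u)(\x)|^2\le |\Omega|^{-1}\int_\Omega |\u(\x-\varepsilon\z)|^2\,d\z.
\end{equation*}
Multiplying by $|f^\varepsilon(\x)|^2$ and integrating over $\R^d$, then swapping the order of integration, yields
\begin{equation*}
\|[f^\varepsilon]S_\varepsilon\u\|_{L_2(\R^d)}^2
\le |\Omega|^{-1}\int_\Omega d\z \int_{\R^d}|f^\varepsilon(\x)|^2|\u(\x-\varepsilon\z)|^2\,d\x.
\end{equation*}

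Next, I will perform the translation $\x=\y+\varepsilon\z$ in the inner integral. Writing $f^\varepsilon(\y+\varepsilon\z)=f(\y/\varepsilon+\z)$, the inner integral becomes
\begin{equation*}
\int_{\R^d}|f(\y/\varepsilon+\z)|^2|\u(\y)|^2\,d\y,
\end{equation*}
and integrating in $\z$ over $\Omega$ gives, by Fubini,
\begin{equation*}
\int_{\R^d}|\u(\y)|^2\,d\y\int_\Omega |f(\y/\varepsilon+\z)|^2\,d\z.
\end{equation*}

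The crucial point is the identity $\int_\Omega |f(\y/\varepsilon+\z)|^2\,d\z=\|f\|_{L_2(\Omega)}^2$, valid for every $\y\in\R^d$. This follows because $\Omega$ is a fundamental domain for $\Gamma$ and $|f|^2$ is $\Gamma$-periodic, so the integral of $|f|^2$ over any translate of $\Omega$ reduces to the integral over $\Omega$ itself. Substituting this back gives
\begin{equation*}
\|[f^\varepsilon]S_\varepsilon\u\|_{L_2(\R^d)}^2\le |\Omega|^{-1}\|f\|_{L_2(\Omega)}^2\|\u\|_{L_2(\R^d)}^2,
\end{equation*}
from which the proposition follows by taking square roots. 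I do not anticipate any real obstacle here; the only point requiring a brief verification is the periodicity identity for the translate of $\Omega$, which is a standard fact about fundamental domains but deserves to be mentioned explicitly.
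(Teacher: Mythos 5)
Your proof is correct, and it is essentially the standard argument: the paper itself does not prove this proposition but quotes it from \cite{ZhPas1} and \cite[Proposition~3.2]{PSu}, where the proof proceeds exactly as you do (pointwise Cauchy--Schwarz in the averaging integral, Fubini, the shift $\mathbf{x}=\mathbf{y}+\varepsilon\mathbf{z}$, and the observation that the integral of the $\Gamma$-periodic function $|f|^2$ over any translate of the fundamental domain $\Omega$ equals $\Vert f\Vert_{L_2(\Omega)}^2$). Nothing is missing.
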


\subsection{Auxiliary result}

We fix a linear continuous extension operator 
\begin{equation}
\label{P_O}
P_\mathcal{O}:H^l(\mathcal{O};\mathbb{C}^n)
\rightarrow H^l(\mathbb{R}^d;\mathbb{C}^n),\quad l=1,2
.
\end{equation}
Let 
\begin{equation}
\label{C_O}
C_\mathcal{O}:=\Vert P_\mathcal{O}\Vert _{H^1(\mathcal{O})\rightarrow H^1(\mathbb{R}^d)}.
\end{equation}
By $R_\mathcal{O}$ we denote the operator of restriction of functions in $\mathbb{R}^d$ onto the domain $\mathcal{O}$.

The following result was obtained in \cite[Lemma~7.3]{PSu}.

\begin{lemma}[\cite{PSu}]
Let $\boldsymbol{\Phi}\in L_2(\mathcal{O};\mathbb{C}^n)$. When for $0<\varepsilon\leqslant 1$ we have
\begin{equation}
\label{lm7.3PSu}
\begin{split}
\Vert& A_\varepsilon (I+\varepsilon R_\mathcal{O}[\Lambda ^\varepsilon ]S_\varepsilon b(\mathbf{D})P_\mathcal{O})(A_D^0)^{-1}\boldsymbol{\Phi}-A^0(A_D^0)^{-1}\boldsymbol{\Phi}\Vert _{H^{-1}(\mathcal{O})}
\\
&\leqslant C_1\varepsilon\Vert (A_D^0)^{-1}\boldsymbol{\Phi}\Vert _{H^2(\mathcal{O})}.
\end{split}
\end{equation}
The constant here depends only on $m$, $d$, $\alpha _0$, $\alpha _1$, $\Vert g\Vert _{L_\infty}$, $\Vert g^{-1}\Vert _{L_\infty}$,  the parameters of the lattice $\Gamma$, and the domain $\mathcal{O}$.
\end{lemma}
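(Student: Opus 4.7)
Set $\mathbf{u}_0 := (A_D^0)^{-1}\boldsymbol{\Phi}$, which lies in $H^2(\mathcal{O};\mathbb{C}^n)\cap H^1_0(\mathcal{O};\mathbb{C}^n)$ by \eqref{AD0-1toH2}. Put $\tilde{\mathbf{u}}_0 := P_\mathcal{O}\mathbf{u}_0 \in H^2(\mathbb{R}^d;\mathbb{C}^n)$ and $\mathbf{w} := b(\mathbf{D})\tilde{\mathbf{u}}_0 \in H^1(\mathbb{R}^d;\mathbb{C}^m)$, and observe $\mathbf{w}|_\mathcal{O} = b(\mathbf{D})\mathbf{u}_0$. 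By duality, \eqref{lm7.3PSu} will follow once I bound, for an arbitrary $\boldsymbol\eta \in H^1_0(\mathcal{O};\mathbb{C}^n)$,
\[
\mathcal{I}(\boldsymbol\eta) := \bigl\langle [A_\varepsilon(I + \varepsilon R_\mathcal{O}[\Lambda^\varepsilon]S_\varepsilon b(\mathbf{D})P_\mathcal{O}) - A^0]\mathbf{u}_0,\ \boldsymbol\eta\bigr\rangle_{L_2(\mathcal{O})},
\]
which, in the form sense, equals $\bigl(g^\varepsilon[\mathbf{w} + \varepsilon b(\mathbf{D})(\Lambda^\varepsilon S_\varepsilon\mathbf{w})] - g^0\mathbf{w},\ b(\mathbf{D})\boldsymbol\eta\bigr)_{L_2(\mathcal{O})}$, by $C\varepsilon\Vert\mathbf{u}_0\Vert_{H^2(\mathcal{O})}\Vert\boldsymbol\eta\Vert_{H^1(\mathcal{O})}$.

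The first step is the Leibniz rule for the rapidly oscillating corrector. Using $D_j\Lambda^\varepsilon = \varepsilon^{-1}(D_j\Lambda)^\varepsilon$, one gets
\[
\varepsilon\,b(\mathbf{D})(\Lambda^\varepsilon S_\varepsilon\mathbf{w}) = (b(\mathbf{D})\Lambda)^\varepsilon S_\varepsilon\mathbf{w} + \varepsilon\sum_{l=1}^d b_l\,\Lambda^\varepsilon D_l S_\varepsilon\mathbf{w},
\]
so the prefactor $\varepsilon$ exactly absorbs the $\varepsilon^{-1}$ produced by differentiating the fast variable. Introducing $\tilde g(\mathbf{x}) := g(\mathbf{x})(b(\mathbf{D})\Lambda(\mathbf{x}) + \mathbf{1}_m)$, which by construction of $\Lambda$ and $g^0$ satisfies $\langle\tilde g\rangle = g^0$ and $b(\mathbf{D})^*\tilde g = 0$ weakly, a direct algebraic regrouping gives
\[
g^\varepsilon\mathbf{w} + g^\varepsilon(b(\mathbf{D})\Lambda)^\varepsilon S_\varepsilon\mathbf{w} - g^0\mathbf{w} = (g^\varepsilon - g^0)(\mathbf{w} - S_\varepsilon\mathbf{w}) + (\tilde g^\varepsilon - g^0)S_\varepsilon\mathbf{w},
\]
and hence the decomposition $\mathcal{I}(\boldsymbol\eta) = \mathcal{I}_1 + \mathcal{I}_2 + \varepsilon\mathcal{I}_3$ with
\begin{align*}
\mathcal{I}_1 &:= \bigl((g^\varepsilon - g^0)(\mathbf{w} - S_\varepsilon\mathbf{w}),\ b(\mathbf{D})\boldsymbol\eta\bigr)_{L_2(\mathcal{O})},\\
\mathcal{I}_2 &:= \bigl((\tilde g^\varepsilon - g^0)S_\varepsilon\mathbf{w},\ b(\mathbf{D})\boldsymbol\eta\bigr)_{L_2(\mathcal{O})},\\
\mathcal{I}_3 &:= \Bigl(g^\varepsilon\sum_{l=1}^d b_l\Lambda^\varepsilon D_l S_\varepsilon\mathbf{w},\ b(\mathbf{D})\boldsymbol\eta\Bigr)_{L_2(\mathcal{O})}.
\end{align*}

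The two soft terms $\mathcal{I}_1$ and $\varepsilon\mathcal{I}_3$ are handled directly. For $\mathcal{I}_1$, \eqref{g0 estimates} bounds $|g^\varepsilon - g^0|$ uniformly, the standard Steklov estimate gives $\Vert\mathbf{w} - S_\varepsilon\mathbf{w}\Vert_{L_2(\mathbb{R}^d)} \leq r_0\varepsilon\Vert\nabla\mathbf{w}\Vert_{L_2(\mathbb{R}^d)}$ with $r_0$ depending only on $\Omega$, and the $H^2$-continuity of $P_\mathcal{O}$ together with \eqref{b_l<=} converts this into a bound by $\Vert\mathbf{u}_0\Vert_{H^2(\mathcal{O})}$; the factor $\Vert b(\mathbf{D})\boldsymbol\eta\Vert_{L_2}\leq \alpha_1^{1/2}\Vert\boldsymbol\eta\Vert_{H^1}$ completes the estimate. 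For $\varepsilon\mathcal{I}_3$, because $D_l$ and $S_\varepsilon$ commute, Proposition~\ref{Proposition f Pi on H-kappa} applied component-wise to $\Lambda$, together with \eqref{Lambda <=}, yields $\Vert[\Lambda^\varepsilon]S_\varepsilon(D_l\mathbf{w})\Vert_{L_2(\mathbb{R}^d)} \leq M\Vert D_l\mathbf{w}\Vert_{L_2(\mathbb{R}^d)} \leq C\Vert\mathbf{u}_0\Vert_{H^2(\mathcal{O})}$, and the overall prefactor $\varepsilon$ provides the required smallness.

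The crucial and most delicate step is $\mathcal{I}_2$: here $\tilde g^\varepsilon - g^0$ is only $O(1)$ in $L_\infty$, and its smallness has to be extracted entirely from its vanishing mean. The standard device, as developed in \cite{BSu} and exploited in \cite{PSu}, is a \emph{flux corrector}---a $\Gamma$-periodic, mean-zero matrix-valued primitive $\widetilde\Lambda$, whose existence is guaranteed by the combination $\langle\tilde g\rangle = g^0$ and $b(\mathbf{D})^*\tilde g = 0$, and which allows $(\tilde g^\varepsilon - g^0)S_\varepsilon\mathbf{w}$ to be represented as an $\varepsilon$-multiple of a first-order differential expression in $\widetilde\Lambda^\varepsilon$. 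After integration by parts moving one derivative onto $b(\mathbf{D})\boldsymbol\eta$, this step produces the needed factor of $\varepsilon$ and leaves Leibniz remainders of the same structure as $\mathcal{I}_1$ and $\mathcal{I}_3$, controlled again via Proposition~\ref{Proposition f Pi on H-kappa} and the $L_2(\Omega)$-bound for $\widetilde\Lambda$. This flux-corrector step is the main obstacle; modulo its careful setup (carried out in detail in \cite{BSu,PSu}), summing the three contributions gives $|\mathcal{I}(\boldsymbol\eta)| \leq C\varepsilon\Vert\mathbf{u}_0\Vert_{H^2(\mathcal{O})}\Vert\boldsymbol\eta\Vert_{H^1(\mathcal{O})}$, which is exactly \eqref{lm7.3PSu}.
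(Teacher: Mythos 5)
First, a point of comparison: the paper does not prove this lemma at all --- it is imported verbatim from \cite[Lemma~7.3]{PSu} --- so the benchmark is the proof in that reference. Your skeleton is exactly the strategy used there and in the related works of Birman--Suslina: pass to the sesquilinear form and test against $\boldsymbol\eta\in H^1_0(\mathcal{O};\mathbb{C}^n)$, use the Leibniz rule so that the prefactor $\varepsilon$ cancels the $\varepsilon^{-1}$ coming from differentiating $\Lambda^\varepsilon$, and split the resulting expression into $(g^\varepsilon-g^0)(\mathbf{w}-S_\varepsilon\mathbf{w})$, $(\widetilde g^{\,\varepsilon}-g^0)S_\varepsilon\mathbf{w}$ and the term carrying an explicit factor $\varepsilon$. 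Your algebraic regrouping is correct, and your treatment of $\mathcal{I}_1$ and $\varepsilon\mathcal{I}_3$ (the Steklov estimate $\Vert\mathbf{w}-S_\varepsilon\mathbf{w}\Vert_{L_2}\leqslant C\varepsilon\Vert\mathbf{D}\mathbf{w}\Vert_{L_2}$, Proposition~\ref{Proposition f Pi on H-kappa} with \eqref{Lambda <=}, and the $H^2$-continuity of $P_\mathcal{O}$) is sound and standard.

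The genuine gap is $\mathcal{I}_2$, which is where the entire content of the lemma is concentrated, and there you do not give a proof: you describe the flux-corrector device and defer its execution to \cite{BSu,PSu}. Since the statement under review \emph{is} \cite[Lemma~7.3]{PSu}, deferring the decisive step to that source reduces the claim to itself rather than proving it. Moreover, the mechanism as you describe it would fail if taken literally: after writing the zero-mean, $b(\mathbf{D})^*$-divergence-free matrix $\widetilde g-g^0$ as $\varepsilon$ times derivatives of a periodic potential evaluated at $\mathbf{x}/\varepsilon$, a naive integration by parts produces, besides the harmless term with a derivative on $S_\varepsilon\mathbf{w}$, a term in which the derivative lands on $b(\mathbf{D})\boldsymbol\eta$ --- but $\boldsymbol\eta$ is only in $H^1_0(\mathcal{O};\mathbb{C}^n)$ (extended by zero to $\mathbb{R}^d$), so second derivatives of $\boldsymbol\eta$ are not available. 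The actual argument rests on the additional structural (antisymmetry/cancellation) properties of the potential, which force precisely those terms to cancel, together with smoothing compositions of the type $[\,\cdot^{\,\varepsilon}]S_\varepsilon$ to compensate for the potential being only in $L_2(\Omega)$ rather than $L_\infty$. Without constructing that potential and carrying out this cancellation, the $O(1)$ quantity $(\widetilde g^{\,\varepsilon}-g^0)S_\varepsilon\mathbf{w}$ has not been shown to contribute $O(\varepsilon)$, so the key estimate \eqref{lm7.3PSu} remains unproved in your write-up.
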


Since $(A_D^0)^{-1}:L_2(\mathcal{O};\mathbb{C}^n)\rightarrow H^2(\mathcal{O};\mathbb{C}^n)\cap H^1_0(\mathcal{O};\mathbb{C}^n)$ is an isomorphism, the following analogue of estimate \eqref{lm7.3PSu} holds.

\begin{corollary}
\label{Corollary Aeps()-A0}
Let $\mathbf{f}\in H^2(\mathcal{O};\mathbb{C}^n)\cap H^1_0(\mathcal{O};\mathbb{C}^n)$. When for $0<\varepsilon\leqslant 1$ we have
\begin{equation*}
\Vert A_\varepsilon (I+\varepsilon R_\mathcal{O}[\Lambda ^\varepsilon ]S_\varepsilon b(\mathbf{D})P_\mathcal{O})\mathbf{f}-A^0\mathbf{f}\Vert _{H^{-1}(\mathcal{O})}\leqslant C_1\varepsilon\Vert \mathbf{f}\Vert _{H^2(\mathcal{O})}.
\end{equation*}
\end{corollary}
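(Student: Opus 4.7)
The corollary is a reformulation of Lemma~7.3 of \cite{PSu}, recorded above as \eqref{lm7.3PSu}, under the substitution $\boldsymbol{\Phi}\mapsto\mathbf{f}:=(A_D^0)^{-1}\boldsymbol{\Phi}$. The plan is to carry out this change of variable carefully and verify that the resulting estimate is exactly the one claimed.

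First, I would record that $(A_D^0)^{-1}$ is an isomorphism of $L_2(\mathcal{O};\mathbb{C}^n)$ onto $H^2(\mathcal{O};\mathbb{C}^n)\cap H^1_0(\mathcal{O};\mathbb{C}^n)$ equipped with the $H^2$-norm. Boundedness into $H^2$ is precisely the estimate \eqref{AD0-1toH2}; the range is the full space $H^2(\mathcal{O};\mathbb{C}^n)\cap H^1_0(\mathcal{O};\mathbb{C}^n)$ because this space is $\Dom A_D^0$, as noted after \eqref{AD0}; and injectivity follows from the positive definiteness of $A_D^0$. Consequently, every $\mathbf{f}\in H^2(\mathcal{O};\mathbb{C}^n)\cap H^1_0(\mathcal{O};\mathbb{C}^n)$ has the unique representation $\mathbf{f}=(A_D^0)^{-1}\boldsymbol{\Phi}$ with $\boldsymbol{\Phi}:=A_D^0\mathbf{f}\in L_2(\mathcal{O};\mathbb{C}^n)$.

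Next, I would exploit that for $\mathbf{f}\in\Dom A_D^0$ the abstract operator $A_D^0$ and the differential expression $A^0=b(\mathbf{D})^*g^0 b(\mathbf{D})$ produce the same element of $L_2(\mathcal{O};\mathbb{C}^n)$, whence
\[
A^0(A_D^0)^{-1}\boldsymbol{\Phi}=A^0\mathbf{f}=A_D^0\mathbf{f}=\boldsymbol{\Phi}.
\]
Using this identity together with $(A_D^0)^{-1}\boldsymbol{\Phi}=\mathbf{f}$, the left-hand side of \eqref{lm7.3PSu} becomes exactly the quantity to be estimated in the corollary, while the right-hand side rewrites as $C_1\varepsilon\Vert\mathbf{f}\Vert_{H^2(\mathcal{O})}$. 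The desired inequality then follows immediately.

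No genuine obstacle is expected; the argument is a dictionary between the two formulations. The only mildly subtle point, which I would check explicitly, is the reduction of the abstract composition $A^0(A_D^0)^{-1}$ applied to $\boldsymbol{\Phi}$ to the plain differential expression $A^0$ evaluated on the $H^2$-function $\mathbf{f}$; this is what allows the conclusion of the corollary to be stated without reference to $(A_D^0)^{-1}$ on the right.
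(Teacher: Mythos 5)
Your argument is correct and coincides with the paper's own (one-line) justification: the paper simply invokes that $(A_D^0)^{-1}$ is an isomorphism of $L_2(\mathcal{O};\mathbb{C}^n)$ onto $H^2(\mathcal{O};\mathbb{C}^n)\cap H^1_0(\mathcal{O};\mathbb{C}^n)$ and substitutes $\boldsymbol{\Phi}=A_D^0\mathbf{f}$ in \eqref{lm7.3PSu}, exactly as you do. Your extra check that $A^0(A_D^0)^{-1}\boldsymbol{\Phi}=A^0\mathbf{f}$ on $\Dom A_D^0$ is a fine (if not strictly necessary) precaution and does not change the route.
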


\section{Hyperbolic systems. Main result}
\label{Section Hyperbolic systems}

\subsection{Problem setting}

\label{Subsection Problem setting}

Let $\mathbf{u}_\varepsilon$ be the solution of the first initial-boundary value problem for the hyperbolic system:
\begin{equation}
\label{u_eps problem F=0}
\begin{cases}
(\partial _t^2 +A_\varepsilon)\mathbf{u}_\varepsilon (\mathbf{x},t)=\mathbf{F}(\mathbf{x},t),\quad \mathbf{x}\in\mathcal{O},\;t\in (0,T),\\
\mathbf{u}_\varepsilon \vert _{\partial\mathcal{O}}=0,\\
\mathbf{u}_\varepsilon (\mathbf{x},0)=\boldsymbol{\phi}(\mathbf{x}),\quad (\partial _t\mathbf{u}_\varepsilon)(\mathbf{x},0)=\boldsymbol{\psi}(\mathbf{x}),\quad\mathbf{x}\in\mathcal{O}.
\end{cases}
\end{equation}
Here the initial data $\boldsymbol{\phi}\in H^2(\mathcal{O};\mathbb{C}^n)\cap H^1_0(\mathcal{O};\mathbb{C}^n)$, $\boldsymbol{\psi}\in H^1_0(\mathcal{O};\mathbb{C}^n)$, and the right-hand side $\mathbf{F}\in L_1((0,T);H^1_0(\mathcal{O};\mathbb{C}^n))$ for some $0<T\leqslant\infty$. When
\begin{equation}
\label{ueps=}
\begin{split}
\mathbf{u}_\varepsilon (\cdot ,t)&=\cos (tA_{D,\varepsilon}^{1/2})\boldsymbol{\phi}+A_{D,\varepsilon}^{-1/2}\sin (tA_{D,\varepsilon}^{1/2})\boldsymbol{\psi}
\\
&+\int _0^t A_{D,\varepsilon} ^{-1/2}\sin ((t-s)A_{D,\varepsilon} ^{1/2})\mathbf{F}(\cdot ,s)\,ds.
\end{split}
\end{equation}

\subsection{The effective problem}

\label{Subsection Effective problem}

The effective problem has the form
\begin{equation}
\label{u0 problem F=0}
\begin{cases}
(\partial _t^2 +A^0)\mathbf{u}_0 (\mathbf{x},t)=\mathbf{F}(\mathbf{x},t),\quad \mathbf{x}\in\mathcal{O},\;t\in (0,T),\\
\mathbf{u}_0 \vert _{\partial\mathcal{O}}=0,\\
\mathbf{u}_0 (\mathbf{x},0)=\boldsymbol{\phi}(\mathbf{x}),\quad (\partial _t\mathbf{u}_0)(\mathbf{x},0)=\boldsymbol{\psi}(\mathbf{x}),\quad\mathbf{x}\in\mathcal{O}.
\end{cases}
\end{equation}
We have
\begin{equation}
\label{u_0=}
\begin{split}
\mathbf{u}_0(\cdot ,t)&=\cos (t(A_D^0)^{1/2})\boldsymbol{\phi}+(A_D^0)^{-1/2}\sin (t(A_D^0)^{1/2})\boldsymbol{\psi}
\\
&+\int _0^t (A_{D}^0) ^{-1/2}\sin ((t-s)(A_{D}^0) ^{1/2})\mathbf{F}(\cdot ,s)\,ds.
\end{split}
\end{equation}
So, 
\begin{equation}
\label{u0 in H2 <= start}
\begin{split}
\Vert \mathbf{u}_0(\cdot ,t)\Vert _{H^2(\mathcal{O})}
&\leqslant\Vert \boldsymbol{\phi}\Vert _{H^2(\mathcal{O})}+\Vert (\mathbf{D}^2 +I)(A_D^0)^{-1/2}\boldsymbol{\psi}\Vert _{L_2(\mathcal{O})}
\\
&+\int _0^t\Vert (\mathbf{D}^2 +I)(A_D^0)^{-1/2}\mathbf{F}(\cdot ,s)\Vert _{L_2(\mathcal{O})}\,ds.
\end{split}
\end{equation}
By \eqref{1.10a}, 
\begin{equation*}
\begin{split}
\Vert (\mathbf{D}^2 +I)(A_D^0)^{-1/2}\boldsymbol{\psi}\Vert _{L_2(\mathcal{O})}
&\leqslant \Vert(\mathbf{D}^2 +I)^{1/2}(A_D^0)^{-1/2}\Vert _{L_2(\mathcal{O})\rightarrow L_2(\mathcal{O})}
\\
&\times\Vert (\mathbf{D}^2 +I)^{1/2}\boldsymbol{\psi}\Vert _{L_2(\mathcal{O})}
\leqslant c_*\Vert  \boldsymbol{\psi}\Vert _{H^1(\mathcal{O})}.
\end{split}
\end{equation*}
The summand with $\mathbf{F}$ in \eqref{u0 in H2 <= start} can be estimated in the same manner. Combining this with \eqref{u0 in H2 <= start}, we get
\begin{equation}
\label{u0 in H2}
\Vert \mathbf{u}_0(\cdot ,t)\Vert _{H^2(\mathcal{O})}\leqslant\Vert \boldsymbol{\phi}\Vert _{H^2(\mathcal{O})}+c_*\Vert\boldsymbol{\psi}\Vert _{H^1(\mathcal{O})}
+c_*\Vert\mathbf{F}\Vert_{L_1((0,t);H^1(\mathcal{O}))}.
\end{equation}

\subsection{Homogenization of solutions of the first initial-boundary value problem for hyperbolic systems}

Our \textit{main result} is the following theorem.

\begin{theorem}
\label{Theorem solutions}
Under the assumptions of Subsections~\textnormal{\ref{Subsection lattices}--\ref{Subsection Effective operator}} and \textnormal{\ref{Subsection Problem setting}, \ref{Subsection Effective problem}}, for $0<\varepsilon\leqslant 1$ and $t\in (0,T)$, we have
\begin{equation}
\label{Th solutions}
\begin{split}
\Vert &\mathbf{u}_\varepsilon (\cdot ,t)-\mathbf{u}_0(\cdot ,t)\Vert _{L_2(\mathcal{O})}
\\
&\leqslant C_2\varepsilon  (1+\vert t\vert )\left(\Vert \boldsymbol{\phi}\Vert _{H^2(\mathcal{O})}
+\Vert \boldsymbol{\psi}\Vert _{H^{1}(\mathcal{O})}+\Vert \mathbf{F}\Vert _{L_1((0,t);H^{1}(\mathcal{O}))}\right).
\end{split}
\end{equation}
The constant $C_2$ depends only on $m$, $d$, $\alpha _0$, $\alpha _1$, $\Vert g\Vert _{L_\infty}$, $\Vert g^{-1}\Vert _{L_\infty}$, the parameters of the lattice $\Gamma$, and the domain~$\mathcal{O}$.
\end{theorem}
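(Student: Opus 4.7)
The plan is to follow the scheme outlined in the Introduction. Set $K_\varepsilon := R_\mathcal{O}[\Lambda^\varepsilon]S_\varepsilon b(\D)P_\mathcal{O}$; combining \eqref{Lambda <=}, Proposition~\ref{Proposition f Pi on H-kappa}, \eqref{<b^*b<}, and \eqref{C_O} gives $\Vert\varepsilon K_\varepsilon \u\Vert_{L_2(\mathcal{O})}\leqslant C\varepsilon\Vert \u\Vert_{H^1(\mathcal{O})}$ for $\u\in H^1(\mathcal{O};\mathbb{C}^n)$. For each $t$, introduce the boundary-layer term
\begin{equation*}
\a_\varepsilon(\cdot,t):=A_{D,\varepsilon}^{-1}A_\varepsilon\bigl(\varepsilon K_\varepsilon \u_0(\cdot,t)\bigr)\in H^1_0(\mathcal{O};\mathbb{C}^n),
\end{equation*}
so that $A_{D,\varepsilon}\a_\varepsilon=A_\varepsilon(\varepsilon K_\varepsilon \u_0)$ as elements of $H^{-1}(\mathcal{O};\mathbb{C}^n)$, and Lemma~\ref{Lemma ADeps -1 Aeps} yields $\Vert\a_\varepsilon(\cdot,t)\Vert_{L_2(\mathcal{O})}\leqslant C\varepsilon\Vert\u_0(\cdot,t)\Vert_{H^1(\mathcal{O})}$.

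Since $\u_0\in H^2\cap H^1_0$ gives $A_{D,\varepsilon}\u_0=A_\varepsilon\u_0$, the difference $\u_\varepsilon-\u_0$ lies in $H^1_0$ and solves $(\partial_t^2+A_{D,\varepsilon})(\u_\varepsilon-\u_0)=-(A_\varepsilon-A^0)\u_0$ with zero initial data. I split the right-hand side as
\begin{equation*}
(A_\varepsilon-A^0)\u_0=\bigl(A_\varepsilon(I+\varepsilon K_\varepsilon)-A^0\bigr)\u_0-A_{D,\varepsilon}\a_\varepsilon,
\end{equation*}
and apply Duhamel's formula with $P(\tau):=A_{D,\varepsilon}^{-1/2}\sin(\tau A_{D,\varepsilon}^{1/2})$. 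Noting $A_{D,\varepsilon}P(t-s)=A_{D,\varepsilon}^{1/2}\sin((t-s)A_{D,\varepsilon}^{1/2})=\partial_s\cos((t-s)A_{D,\varepsilon}^{1/2})$, integration by parts in $s$ inside the $A_{D,\varepsilon}\a_\varepsilon$-contribution produces
\begin{equation*}
\begin{split}
\u_\varepsilon(t)-\u_0(t)&=\a_\varepsilon(t)-\cos(tA_{D,\varepsilon}^{1/2})\a_\varepsilon(0)\\
&\quad-\int_0^t A_{D,\varepsilon}^{-1/2}\sin((t-s)A_{D,\varepsilon}^{1/2})\bigl(A_\varepsilon(I+\varepsilon K_\varepsilon)-A^0\bigr)\u_0(s)\,ds\\
&\quad-\int_0^t\cos((t-s)A_{D,\varepsilon}^{1/2})\partial_s\a_\varepsilon(s)\,ds.
\end{split}
\end{equation*}

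Each of the four pieces is then estimated in $L_2(\mathcal{O})$. The corrector terms $\a_\varepsilon(t)$ and $\cos(tA_{D,\varepsilon}^{1/2})\a_\varepsilon(0)$ contribute $C\varepsilon\Vert\u_0(t)\Vert_{H^1}$ and $C\varepsilon\Vert\bphi\Vert_{H^1}$ by the corrector bound. For the $\sin$-integral I combine $\Vert A_{D,\varepsilon}^{-1/2}\sin(\cdot)\Vert_{H^{-1}\to L_2}\leqslant c_*$ (from \eqref{ADeps-1/2 L2 to H1} with $|\sin|\leqslant 1$) with the key estimate $\Vert(A_\varepsilon(I+\varepsilon K_\varepsilon)-A^0)\u_0(s)\Vert_{H^{-1}}\leqslant C_1\varepsilon\Vert\u_0(s)\Vert_{H^2}$ from Corollary~\ref{Corollary Aeps()-A0}. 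For the $\cos$-integral I use $\Vert\cos(\cdot)\Vert_{L_2\to L_2}\leqslant 1$ together with the identity $\partial_s\a_\varepsilon(s)=A_{D,\varepsilon}^{-1}A_\varepsilon(\varepsilon K_\varepsilon\partial_s\u_0(s))$ and Lemma~\ref{Lemma ADeps -1 Aeps} to obtain $\Vert\partial_s\a_\varepsilon(s)\Vert_{L_2}\leqslant C\varepsilon\Vert\partial_s\u_0(s)\Vert_{H^1}$. Substituting \eqref{u0 in H2} together with the parallel bound $\Vert\partial_s\u_0(s)\Vert_{H^1(\mathcal{O})}\leqslant C(\Vert\bphi\Vert_{H^2}+\Vert\bpsi\Vert_{H^1}+\Vert\FF\Vert_{L_1((0,s);H^1)})$—derived from \eqref{1.10a}, Lemma~\ref{Lemma AD0h}, and the representation \eqref{u_0=}—the two time integrations each contribute a factor $|t|$, producing \eqref{Th solutions}.

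The main obstacle is the $A_{D,\varepsilon}\a_\varepsilon$-contribution: naively bounding $\int_0^t P(t-s)A_{D,\varepsilon}\a_\varepsilon(s)\,ds$ in $L_2$ would require controlling $A_{D,\varepsilon}^{1/2}\sin((t-s)A_{D,\varepsilon}^{1/2})$ as an $L_2$-operator, which is unbounded; equivalently, a first-order approximation $\u_0+\varepsilon K_\varepsilon\u_0$ would generate a residual involving $\partial_t^2\u_0$, which fails to lie in $H^1$ under the stated regularity of the data. The integration by parts circumvents both difficulties by shifting one derivative onto $\a_\varepsilon$, leaving only $\partial_s\u_0\in H^1$ regularity to control. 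It is at this point that the $L_2$-boundedness in Lemma~\ref{Lemma ADeps -1 Aeps} pays off, keeping both $\a_\varepsilon$ and $\partial_t\a_\varepsilon$ of size $O(\varepsilon)$ and, together with Corollary~\ref{Corollary Aeps()-A0}, securing the linear-in-$t$ estimate.
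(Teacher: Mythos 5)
Your proposal is correct and is essentially the paper's own argument in a slightly repackaged form: the paper introduces the boundary-layer function $\mathbf{w}_\varepsilon$ as the solution of an auxiliary initial-boundary value problem and estimates $\mathbf{u}_\varepsilon-\mathbf{v}_\varepsilon+\mathbf{w}_\varepsilon$ and $\mathbf{w}_\varepsilon-\varepsilon K_\varepsilon\mathbf{u}_0$ separately, whereas you write a single Duhamel formula for $\mathbf{u}_\varepsilon-\mathbf{u}_0$, split the source as $(A_\varepsilon(I+\varepsilon K_\varepsilon)-A^0)\mathbf{u}_0-A_{D,\varepsilon}\mathbf{a}_\varepsilon$, and integrate by parts — which reproduces term by term the paper's $\mathcal{I}(\varepsilon,t)$ computation, with $\mathbf{a}_\varepsilon=A_{D,\varepsilon}^{-1}A_\varepsilon\varepsilon K_\varepsilon\mathbf{u}_0$ playing the role of $\varepsilon K_\varepsilon\mathbf{u}_0-\mathbf{w}_\varepsilon$. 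The key ingredients (Lemma~\ref{Lemma ADeps -1 Aeps}, Corollary~\ref{Corollary Aeps()-A0}, estimates \eqref{ADeps-1/2 L2 to H1}, \eqref{u0 in H2}, Proposition~\ref{Proposition f Pi on H-kappa}) and the integration by parts shifting the time derivative onto the $O(\varepsilon)$ corrector quantity coincide with the paper's proof, so no further comparison is needed.
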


\subsection{Main results in operator terms}

Since functions $\boldsymbol{\phi}\in H^2(\mathcal{O};\mathbb{C}^n)\cap H^1_0(\mathcal{O};\mathbb{C}^n)$ and $\boldsymbol{\psi}\in H^1_0(\mathcal{O};\mathbb{C}^n)$ in \eqref{ueps=} and \eqref{u_0=} are arbitrarily and one can take $\mathbf{F}=0$, Theorem~\ref{Theorem solutions} admits formulation in operator terms. 

\begin{theorem} 
\label{Theorem main result}
Let $\mathcal{O}$ be a bounded domain of class $C^{1,1}$. Let the assumptions of Subsections~\textnormal{\ref{Subsection lattices}--\ref{Subsection Effective operator}} be satisfied.  For $0<\varepsilon\leqslant 1$, $t\in\mathbb{R}$, we have
\begin{align*}
&\Vert \cos (tA_{D,\varepsilon}^{1/2})-\cos(t(A_D^0)^{1/2})\Vert _{H^2(\mathcal{O})\cap H^1_0(\mathcal{O})\rightarrow L_2(\mathcal{O})}\leqslant C_2\varepsilon (1+\vert t\vert),\\
&\Vert A_{D,\varepsilon}^{-1/2}\sin(tA_{D,\varepsilon}^{1/2})-(A_D^0)^{-1/2}\sin (t(A_D^0)^{1/2})\Vert _{H^1_0(\mathcal{O})\rightarrow L_2(\mathcal{O})}\leqslant C_2\varepsilon (1+\vert t\vert).
\end{align*}
Here the space $H^2(\mathcal{O};\mathbb{C}^n)\cap H^1_0(\mathcal{O};\mathbb{C}^n)$ is equipped with the $H^2$-norm. 
The constant $C_2$ depends only on $m$, $d$, $\alpha_0$, $\alpha _1$, $\Vert g\Vert _{L_\infty}$, $\Vert g^{-1}\Vert _{L_\infty}$, the parameters of the lattice $\Gamma$,  and the domain $\mathcal{O}$.
\end{theorem}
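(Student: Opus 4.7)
The plan is to obtain Theorem~\ref{Theorem main result} as an immediate operator-theoretic reformulation of Theorem~\ref{Theorem solutions}, and then to prove the latter via the first-order approximation and boundary-layer scheme announced in the Introduction.

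First, I would deduce Theorem~\ref{Theorem main result} from Theorem~\ref{Theorem solutions} by specializing the initial data. Setting $\mathbf{F}=0$ and $\boldsymbol{\psi}=0$ in \eqref{ueps=}--\eqref{u_0=} gives $\mathbf{u}_\varepsilon(\cdot,t)=\cos(tA_{D,\varepsilon}^{1/2})\boldsymbol{\phi}$ and $\mathbf{u}_0(\cdot,t)=\cos(t(A_D^0)^{1/2})\boldsymbol{\phi}$, so \eqref{Th solutions} becomes exactly the cosine estimate once the supremum over unit-$H^2$-norm $\boldsymbol{\phi}\in H^2(\mathcal{O};\mathbb{C}^n)\cap H^1_0(\mathcal{O};\mathbb{C}^n)$ is taken. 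Choosing $\mathbf{F}=0$, $\boldsymbol{\phi}=0$ and taking the supremum over unit-$H^1$-norm $\boldsymbol{\psi}\in H^1_0(\mathcal{O};\mathbb{C}^n)$ yields the sine estimate. Extension from $t\in(0,T)$ to $t\in\mathbb{R}$ uses the parity of $\cos$ and $\sin$ in $t$ together with letting $T\to\infty$.

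The bulk of the work is Theorem~\ref{Theorem solutions}. Following the scheme indicated in the Introduction, set $K(\varepsilon):=R_\mathcal{O}[\Lambda^\varepsilon]S_\varepsilon b(\mathbf{D})P_\mathcal{O}$, form the first-order approximation $\mathbf{v}_\varepsilon(\cdot,t):=\mathbf{u}_0(\cdot,t)+\varepsilon K(\varepsilon)\mathbf{u}_0(\cdot,t)$, and introduce a boundary-layer correction $\mathbf{w}_\varepsilon$ whose Dirichlet trace on $\partial\mathcal{O}$ coincides with that of $\varepsilon K(\varepsilon)\mathbf{u}_0$; then the remainder $\mathbf{r}_\varepsilon:=\mathbf{u}_\varepsilon-\mathbf{v}_\varepsilon+\mathbf{w}_\varepsilon$ belongs to $H^1_0(\mathcal{O};\mathbb{C}^n)$. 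Direct computation shows that $(\partial_t^2+A_\varepsilon)\mathbf{r}_\varepsilon$ equals the elliptic discrepancy $A^0\mathbf{u}_0-A_\varepsilon(I+\varepsilon K(\varepsilon))\mathbf{u}_0$ plus lower-order contributions coming from $\mathbf{w}_\varepsilon$; by Corollary~\ref{Corollary Aeps()-A0} this discrepancy is $O(\varepsilon)\Vert\mathbf{u}_0(\cdot,t)\Vert_{H^2(\mathcal{O})}$ in $H^{-1}(\mathcal{O})$, uniformly in $t$.

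Applying Duhamel's formula for $A_{D,\varepsilon}$ together with the bound $\Vert A_{D,\varepsilon}^{-1/2}\sin(sA_{D,\varepsilon}^{1/2})\Vert_{H^{-1}(\mathcal{O})\to L_2(\mathcal{O})}\leqslant c_*$ (obtained from \eqref{ADeps-1/2 L2 to H1} and contractivity of $\sin(sA_{D,\varepsilon}^{1/2})$ on $L_2$) produces an $L_2$-bound for $\mathbf{r}_\varepsilon(\cdot,t)$ of order $\varepsilon\int_0^t\Vert\mathbf{u}_0(\cdot,s)\Vert_{H^2(\mathcal{O})}\,ds$, which, combined with \eqref{u0 in H2}, yields the desired $\varepsilon(1+|t|)$ growth. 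The corrector term $\Vert\varepsilon K(\varepsilon)\mathbf{u}_0\Vert_{L_2(\mathcal{O})}$ is controlled directly by Proposition~\ref{Proposition f Pi on H-kappa} and \eqref{Lambda <=}. The delicate point, and the main obstacle, is the estimate of the boundary-layer piece $\Vert\mathbf{w}_\varepsilon-\varepsilon K(\varepsilon)\mathbf{u}_0\Vert_{L_2(\mathcal{O})}$: this is precisely where Lemma~\ref{Lemma ADeps -1 Aeps} enters, following the elliptic strategy of \cite{PSu,SuL2}, but one must now carry it out uniformly in the time variable and correctly route the $H^2$-regularity of $\mathbf{u}_0(\cdot,t)$ through the Duhamel integration in order to obtain the sharp linear-in-$t$ growth rather than the polynomial bounds of \eqref{old cos}--\eqref{old sin}.
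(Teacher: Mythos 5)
Your reduction of Theorem~\ref{Theorem main result} to Theorem~\ref{Theorem solutions} (take $\mathbf{F}=0$, specialize $\boldsymbol{\psi}=0$ resp.\ $\boldsymbol{\phi}=0$, use arbitrariness of the data and parity in $t$) is exactly the paper's argument, and your overall scheme for Theorem~\ref{Theorem solutions} — first-order approximation $\mathbf{v}_\varepsilon=\mathbf{u}_0+\varepsilon K(\varepsilon)\mathbf{u}_0$ with $K(\varepsilon)=R_\mathcal{O}[\Lambda^\varepsilon]S_\varepsilon b(\mathbf{D})P_\mathcal{O}$, boundary-layer term $\mathbf{w}_\varepsilon$, Corollary~\ref{Corollary Aeps()-A0} plus Duhamel with $\Vert A_{D,\varepsilon}^{-1/2}\Vert_{H^{-1}\to L_2}\leqslant c_*$ for $\mathbf{u}_\varepsilon-\mathbf{v}_\varepsilon+\mathbf{w}_\varepsilon$, Proposition~\ref{Proposition f Pi on H-kappa} for the corrector — is the paper's method. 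However, there is a genuine gap: the estimate of $\Vert\mathbf{w}_\varepsilon-\varepsilon K(\varepsilon)\mathbf{u}_0\Vert_{L_2(\mathcal{O})}$, which you yourself flag as ``the main obstacle,'' is never actually carried out, and this is the heart of the proof (Lemma~\ref{Lemma w-esp - estimate} in the paper). Moreover, your $\mathbf{w}_\varepsilon$ is underdetermined — you prescribe only its Dirichlet trace — whereas the whole argument hinges on the specific choice \eqref{Discrepancy problem}: the source $\varepsilon K(\varepsilon)\partial_t^2\widetilde{\mathbf{u}}_0$ and the initial data $\varepsilon K(\varepsilon)\boldsymbol{\phi}$, $\varepsilon K(\varepsilon)\boldsymbol{\psi}$ are chosen so that $\mathbf{u}_\varepsilon-\mathbf{v}_\varepsilon+\mathbf{w}_\varepsilon$ has \emph{exactly} the elliptic discrepancy $A^0\mathbf{u}_0-A_\varepsilon(I+\varepsilon K(\varepsilon))\mathbf{u}_0$ as right-hand side with zero Cauchy data (no ``lower-order contributions''), and so that $\mathbf{w}_\varepsilon-\varepsilon K(\varepsilon)\mathbf{u}_0$ solves a problem with zero boundary and initial data and right-hand side $-A_\varepsilon\varepsilon K(\varepsilon)\mathbf{u}_0$.

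The missing idea for that central estimate is the following. Writing $\mathbf{w}_\varepsilon-\varepsilon K(\varepsilon)\mathbf{u}_0$ as the Duhamel integral $-\int_0^t A_{D,\varepsilon}^{-1/2}\sin((t-s)A_{D,\varepsilon}^{1/2})A_\varepsilon\varepsilon K(\varepsilon)\mathbf{u}_0(\cdot,s)\,ds$, one cannot apply Lemma~\ref{Lemma ADeps -1 Aeps} directly; the paper integrates by parts in $s$, using $A_{D,\varepsilon}^{-1/2}\sin((t-s)A_{D,\varepsilon}^{1/2})=\frac{d}{ds}\cos((t-s)A_{D,\varepsilon}^{1/2})A_{D,\varepsilon}^{-1}$, so that the operator acting on $\varepsilon K(\varepsilon)\mathbf{u}_0$ becomes $\cos((t-s)A_{D,\varepsilon}^{1/2})A_{D,\varepsilon}^{-1}A_\varepsilon$, which is a contraction on $L_2$ by Lemma~\ref{Lemma ADeps -1 Aeps}. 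This produces boundary terms at $s=0,t$ and an integral involving $\varepsilon K(\varepsilon)\partial_s\mathbf{u}_0$; the first two are handled by Proposition~\ref{Proposition f Pi on H-kappa} together with \eqref{Lambda <=} and \eqref{u0 in H2}, while the last requires a uniform bound on $\Vert\mathbf{D}\partial_s\widetilde{\mathbf{u}}_0(\cdot,s)\Vert_{L_2}$, obtained from the explicit formula \eqref{u_0=} (this is where the $H^2$-norm of $\boldsymbol{\phi}$ is consumed, via a Lemma~\ref{Lemma AD0h}-type bound for $(A_D^0)^{1/2}$ applied after differentiating in time), and it is precisely this integral that yields the sharp linear growth $\varepsilon(1+\vert t\vert)$. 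Without spelling out the choice of $\mathbf{w}_\varepsilon$ and this integration-by-parts step, your outline identifies the right tools but does not constitute a proof of the statement.
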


\subsection{Start of the proof of Theorem~\ref{Theorem solutions}. Discrepancy}

Let $\mathbf{v}_\varepsilon$ be the first order approximation to the solution $\mathbf{u}_\varepsilon$: 
\begin{equation}
\label{v_eps=}
\mathbf{v}_\varepsilon =\mathbf{u}_0+\varepsilon R_\mathcal{O}\Lambda ^\varepsilon S_\varepsilon  b(\mathbf{D})\widetilde{\mathbf{u}}_0.
\end{equation}
Here $\widetilde{\mathbf{u}}_0:=P_\mathcal{O}\mathbf{u}_0$, where $P_\mathcal{O}$ is the extension operator~\eqref{P_O}.

The function $\mathbf{u}_\varepsilon -\mathbf{v}_\varepsilon$ does not satisfy the Dirichlet boundary condition. It is convenient to introduce the discrepancy $\mathbf{w}_\varepsilon$ as the weak solution of the problem
\begin{equation}
\label{Discrepancy problem}
\begin{cases}
(\partial _t^2 +A_{\varepsilon})\mathbf{w}_\varepsilon =\varepsilon R_\mathcal{O}\Lambda ^\varepsilon S_\varepsilon b(\mathbf{D})\partial _t^2\widetilde{\mathbf{u}}_0\quad\mathrm{in }\;\mathcal{O},\\
\mathbf{w}_\varepsilon \vert _{\partial\mathcal{O}}=\varepsilon \Lambda ^\varepsilon S_\varepsilon  b(\mathbf{D})\widetilde{\mathbf{u}}_0\vert _{\partial\mathcal{O}},\\
\mathbf{w}_\varepsilon (\cdot ,0)=\varepsilon R_\mathcal{O}\Lambda ^\varepsilon S_\varepsilon  b(\mathbf{D})P_\mathcal{O}\boldsymbol{\phi},
\quad
(\partial _t\mathbf{w}_\varepsilon)(\cdot ,0)=\varepsilon R_\mathcal{O}\Lambda ^\varepsilon S_\varepsilon  b(\mathbf{D})P_\mathcal{O}\boldsymbol{\psi}.
\end{cases}
\end{equation}

Let us write \eqref{Discrepancy problem} as
\begin{equation*}
\begin{cases}
(\partial _t^2 +A_{\varepsilon})(\mathbf{w}_\varepsilon -\varepsilon R_\mathcal{O} \Lambda ^\varepsilon S_\varepsilon b(\mathbf{D})\widetilde{\mathbf{u}}_0) =-A_{\varepsilon}\varepsilon R_\mathcal{O} \Lambda ^\varepsilon S_\varepsilon b(\mathbf{D})\widetilde{\mathbf{u}}_0\quad\mathrm{in}\;\mathcal{O},
\\
(\mathbf{w}_\varepsilon -\varepsilon   \Lambda ^\varepsilon S_\varepsilon b(\mathbf{D})\widetilde{\mathbf{u}}_0) \vert _{\partial\mathcal{O}}=0,\\
(\mathbf{w}_\varepsilon -\varepsilon  R_\mathcal{O}\Lambda ^\varepsilon S_\varepsilon b(\mathbf{D})\widetilde{\mathbf{u}}_0) (\cdot ,0)=
0,
\quad
\partial _t(\mathbf{w}_\varepsilon -\varepsilon  R_\mathcal{O}\Lambda ^\varepsilon S_\varepsilon b(\mathbf{D})\widetilde{\mathbf{u}}_0)(\cdot ,0)=
0.
\end{cases}
\end{equation*}
So,
\begin{equation}
\label{w_eps-...=}
\begin{split}
(\mathbf{w}_\varepsilon &-\varepsilon R_\mathcal{O}\Lambda ^\varepsilon S_\varepsilon b(\mathbf{D})\widetilde{\mathbf{u}}_0)(\cdot ,t)=
\\
&-\int_0^t A_{D,\varepsilon} ^{-1/2}\sin((t-s)A_{D,\varepsilon}^{1/2})A_{\varepsilon}\varepsilon R_\mathcal{O} \Lambda ^\varepsilon S_\varepsilon  b(\mathbf{D})\widetilde{\mathbf{u}}_0(\cdot ,s)\,ds
=:\mathcal{I}(\varepsilon ,t).\end{split}
\end{equation}

\begin{lemma}
\label{Lemma w-esp - estimate}
Let $\mathbf{u}_0$ be the solution of effective problem~\eqref{u0 problem F=0}. Let the discrepancy $\mathbf{w}_\varepsilon$ be the solution of problem \eqref{Discrepancy problem}.  
For $0<\varepsilon\leqslant 1$ and $t\in(0,T)$ we have
\begin{equation}
\label{w-eps - estimate}
\begin{split}
\Vert & \mathbf{w}_\varepsilon (\cdot,t)-\varepsilon R_\mathcal{O} \Lambda ^\varepsilon S_\varepsilon b(\mathbf{D})\widetilde{\mathbf{u}}_0(\cdot ,t)\Vert _{L_2(\mathcal{O})}
\\
&\leqslant C_3\varepsilon (1+\vert t\vert )(\Vert \boldsymbol{\phi}\Vert _{H^2(\mathcal{O})}+\Vert \boldsymbol{\psi}\Vert _{H^1(\mathcal{O})}+\Vert \mathbf{F}\Vert _{L_1((0,t);H^1(\mathcal{O}))}).
\end{split}
\end{equation}
 The constant $C_3$ depends only on $m$, $d$, $\alpha _0$, $\alpha _1$, $\Vert g\Vert _{L_\infty}$, $\Vert g^{-1}\Vert _{L_\infty}$, the parameters of the lattice $\Gamma$,  and the domain~$\mathcal{O}$.
\end{lemma}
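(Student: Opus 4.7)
The plan is to work from the representation \eqref{w_eps-...=}, writing $\mathbf{V}_\varepsilon(s):=\varepsilon R_\mathcal{O}\Lambda^\varepsilon S_\varepsilon b(\mathbf{D})\widetilde{\mathbf{u}}_0(s)\in L_2(\mathcal{O};\mathbb{C}^n)$, so that $\mathcal{I}(\varepsilon,t)=-\int_0^t A_{D,\varepsilon}^{-1/2}\sin((t-s)A_{D,\varepsilon}^{1/2})A_\varepsilon\mathbf{V}_\varepsilon(s)\,ds$. The difficulty is that the operator under the integral, after the natural factorisation
\begin{equation*}
A_{D,\varepsilon}^{-1/2}\sin((t-s)A_{D,\varepsilon}^{1/2})A_\varepsilon = A_{D,\varepsilon}^{1/2}\sin((t-s)A_{D,\varepsilon}^{1/2})\cdot A_{D,\varepsilon}^{-1}A_\varepsilon,
\end{equation*}
still contains the spectral multiplier $\lambda^{1/2}\sin((t-s)\lambda^{1/2})$, which is unbounded in $\lambda$. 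The key observation is $A_{D,\varepsilon}^{1/2}\sin((t-s)A_{D,\varepsilon}^{1/2}) = \frac{d}{ds}\cos((t-s)A_{D,\varepsilon}^{1/2})$; integrating by parts in $s$ turns this unbounded factor into the contractive cosine, at the price of one time derivative on $\mathbf{V}_\varepsilon$.

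Concretely, Lemma \ref{Lemma ADeps -1 Aeps} makes $A_{D,\varepsilon}^{-1}A_\varepsilon\mathbf{V}_\varepsilon(s)$ an honest $L_2$-function with norm $\le \Vert\mathbf{V}_\varepsilon(s)\Vert_{L_2(\mathcal{O})}$, so (after approximating $\mathbf{V}_\varepsilon(s)$ by $C_0^\infty$ vectors, on which $A_{D,\varepsilon}^{-1}A_\varepsilon$ acts as the identity and all manipulations are classical, and passing to the limit using Lemma \ref{Lemma ADeps -1 Aeps}) the identity \eqref{w_eps-...=} transforms into
\begin{equation*}
\begin{split}
\mathcal{I}(\varepsilon,t) &= -A_{D,\varepsilon}^{-1}A_\varepsilon\mathbf{V}_\varepsilon(t)+\cos\bigl(tA_{D,\varepsilon}^{1/2}\bigr)A_{D,\varepsilon}^{-1}A_\varepsilon\mathbf{V}_\varepsilon(0) \\
&\quad +\int_0^t\cos\bigl((t-s)A_{D,\varepsilon}^{1/2}\bigr)\,A_{D,\varepsilon}^{-1}A_\varepsilon\,\partial_s\mathbf{V}_\varepsilon(s)\,ds,
\end{split}
\end{equation*}
in which every factor is a contraction on $L_2(\mathcal{O};\mathbb{C}^n)$. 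Bounding the inputs is then routine: Proposition \ref{Proposition f Pi on H-kappa} together with \eqref{Lambda <=}, \eqref{<b^*b<}--\eqref{b_l<=} and the continuity \eqref{C_O} of $P_\mathcal{O}$ yield
\begin{equation*}
\Vert\mathbf{V}_\varepsilon(s)\Vert_{L_2(\mathcal{O})}\le C\varepsilon\Vert\mathbf{u}_0(s)\Vert_{H^1(\mathcal{O})},\qquad \Vert\partial_s\mathbf{V}_\varepsilon(s)\Vert_{L_2(\mathcal{O})}\le C\varepsilon\Vert\partial_s\mathbf{u}_0(s)\Vert_{H^1(\mathcal{O})}.
\end{equation*}

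The remaining ingredient is an $H^1$-bound for the effective solution and its time derivative, uniform in $s\in(0,t)$. For $\mathbf{u}_0$ this is subsumed by \eqref{u0 in H2}; for $\partial_s\mathbf{u}_0$ one differentiates \eqref{u_0=} and estimates the three resulting terms using \eqref{1.10a}, Lemma \ref{Lemma AD0h}, and the functional-calculus identity $(A_D^0)^{1/2}\sin(s(A_D^0)^{1/2})\boldsymbol{\phi} = \sin(s(A_D^0)^{1/2})(A_D^0)^{1/2}\boldsymbol{\phi}$, which yields $\Vert\partial_s\mathbf{u}_0(s)\Vert_{H^1(\mathcal{O})}\le C(\Vert\boldsymbol{\phi}\Vert_{H^2(\mathcal{O})}+\Vert\boldsymbol{\psi}\Vert_{H^1(\mathcal{O})}+\Vert\mathbf{F}\Vert_{L_1((0,t);H^1(\mathcal{O}))})$. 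Substituting into the integration-by-parts formula, the two boundary terms contribute $O(\varepsilon)$ and the time integral contributes $O(\varepsilon t)$, which combine into the required bound \eqref{w-eps - estimate}. The main obstacle is the integration-by-parts step itself: without Lemma \ref{Lemma ADeps -1 Aeps} the factor $A_\varepsilon$ cannot be detached from the boundary-layer corrector $\mathbf{V}_\varepsilon$ inside $L_2(\mathcal{O})$, and the unbounded multiplier $\lambda^{1/2}\sin((t-s)\lambda^{1/2})$ cannot be converted into $\cos((t-s)\lambda^{1/2})$ --- this is exactly the ``crucial $L_2$-boundedness of $A_{D,\varepsilon}^{-1}A_\varepsilon$'' highlighted in the introduction.
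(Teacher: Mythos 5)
Your proposal is correct and takes essentially the same route as the paper: the identical integration by parts in $s$, converting $A_{D,\varepsilon}^{-1/2}\sin((t-s)A_{D,\varepsilon}^{1/2})A_\varepsilon$ into cosine terms composed with the contraction $A_{D,\varepsilon}^{-1}A_\varepsilon$ from Lemma~\ref{Lemma ADeps -1 Aeps}, followed by the same corrector bounds via Proposition~\ref{Proposition f Pi on H-kappa}, \eqref{Lambda <=}, \eqref{<b^*b<}, \eqref{C_O}, and the same estimate of $\partial_s\mathbf{u}_0$ obtained by differentiating \eqref{u_0=} (with \eqref{u0 in H2} handling $\mathbf{u}_0$ itself). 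The only cosmetic difference is your explicit density/approximation remark justifying the manipulations, which the paper leaves implicit.
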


\begin{proof}

Integrating by parts, we have
\begin{equation*}
\begin{split}
\mathcal{I}(\varepsilon ,t)&=-\int _0^t \frac{d\cos((t-s)A_{D,\varepsilon}^{1/2})}{ds}A_{D,\varepsilon}^{-1}A_{\varepsilon}\varepsilon R_\mathcal{O}\Lambda ^\varepsilon S_\varepsilon  b(\mathbf{D})\widetilde{\mathbf{u}}_0(\cdot ,s)\,ds
\\
&=-A_{D,\varepsilon}^{-1}A_\varepsilon\varepsilon R_\mathcal{O}\Lambda^\varepsilon S_\varepsilon b(\mathbf{D})\widetilde{\mathbf{u}}_0(\cdot,t)
\\
&+\cos (tA_{D,\varepsilon}^{1/2})A_{D,\varepsilon}^{-1}A_\varepsilon \varepsilon R_\mathcal{O}\Lambda ^\varepsilon S_\varepsilon  b(\mathbf{D})\widetilde{\mathbf{u}}_0(\cdot ,0)
\\
&+\int _0^t \cos ((t-s)A_{D,\varepsilon}^{1/2})A_{D,\varepsilon}^{-1}A_\varepsilon \varepsilon R_\mathcal{O}\Lambda ^\varepsilon S _\varepsilon  b(\mathbf{D})\partial _s\widetilde{\mathbf{u}}_0(\cdot ,s)\,ds.
\end{split}
\end{equation*}
By Lemma~\ref{Lemma ADeps -1 Aeps} and \eqref{u0 problem F=0}, 
\begin{equation*}
\begin{split}
\Vert \mathcal{I}(\varepsilon ,t)\Vert _{L_2(\mathcal{O})}
&\leqslant \varepsilon\Vert \Lambda ^\varepsilon S_\varepsilon b(\mathbf{D})\widetilde{\mathbf{u}}_0(\cdot ,t)\Vert _{L_2(\mathcal{O})}
+\varepsilon \Vert \Lambda ^\varepsilon S_\varepsilon b(\mathbf{D})P_\mathcal{O}\boldsymbol{\phi}\Vert _{L_2(\mathcal{O})}
\\
&+\varepsilon\int _0^t\Vert \Lambda ^\varepsilon S_\varepsilon b(\mathbf{D})\partial _s\widetilde{\mathbf{u}}_0(\cdot,s)\Vert _{L_2(\mathcal{O})}\,ds.
\end{split}
\end{equation*} 
Using \eqref{<b^*b<}, \eqref{Lambda <=}, \eqref{C_O}, and Proposition~\ref{Proposition f Pi on H-kappa}, we derive that
\begin{equation}
\label{I1<= almost finish}
\begin{split}
\Vert \mathcal{I}(\varepsilon ,t)\Vert _{L_2(\mathcal{O})}
&\leqslant \varepsilon M\alpha _1^{1/2}C_\mathcal{O}\Vert \widetilde{\mathbf{u}}_0(\cdot,t)\Vert _{H^1(\mathcal{O})}
+\varepsilon M \alpha _1^{1/2}C_\mathcal{O}\Vert \boldsymbol{\phi}\Vert _{H^1(\mathcal{O})}
\\
&+\varepsilon M\alpha _1^{1/2}\int _0^t\Vert \mathbf{D}\partial _s\widetilde{\mathbf{u}}_0(\cdot ,s)\Vert _{L_2(\mathcal{O})}\,ds.
\end{split}
\end{equation}

By \eqref{b_l<=}, \eqref{g0 estimates}, \eqref{C_O}, and \eqref{u_0=}, 
\begin{equation}
\label{D dt tilde uo<=}
\begin{split}
\Vert &\mathbf{D}\partial _s\widetilde{\mathbf{u}}_0 (\cdot ,s)\Vert _{L_2(\mathbb{R}^d)}
\\
&=\Bigl\Vert \mathbf{D}P_\mathcal{O}\Bigl(-(A_D^0)^{1/2}\sin (s(A_D^0)^{1/2})\boldsymbol{\phi}+\cos (s (A_D^0)^{1/2})\boldsymbol{\psi}
\\
&+\int _0^t\cos((t-s)(A_D^0)^{1/2})\mathbf{F}(\cdot ,s)\,ds
\Bigr)\Bigr\Vert _{L_2(\mathbb{R}^d)}
\\
&\leqslant 
C_\mathcal{O}\left(\alpha _1^{1/2}d^{1/2}\Vert g\Vert ^{1/2}_{L_\infty}\Vert \boldsymbol{\phi}\Vert _{H^2(\mathcal{O})}+\Vert \boldsymbol{\psi}\Vert _{H^1(\mathcal{O})}
+\Vert \mathbf{F}\Vert _{L_1((0,t);H^1(\mathcal{O}))}\right).
\end{split}
\end{equation}
(We assumed that $\boldsymbol{\phi}\in H^2(\mathcal{O};\mathbb{C}^n)\cap H^1_0(\mathcal{O};\mathbb{C}^n)$, so we can apply the operator $(A_D^0)^{1/2}$ to $\boldsymbol{\phi}$.) 

Combining \eqref{u0 in H2}, \eqref{I1<= almost finish}, and \eqref{D dt tilde uo<=}, we get
\begin{equation}
\label{I1<=}
\Vert \mathcal{I}(\varepsilon ,t)\Vert _{L_2(\mathcal{O})}\leqslant C_3 \varepsilon (1+\vert t\vert )(\Vert \boldsymbol{\phi}\Vert _{H^2(\mathcal{O})}+\Vert \boldsymbol{\psi}\Vert _{H^1(\mathcal{O})}+\Vert \mathbf{F}\Vert _{L_1((0,t);H^1(\mathcal{O}))}),
\end{equation}
where $C_3 :=M\alpha _1^{1/2}C_\mathcal{O}\max\lbrace 2;c_*;\alpha _1^{1/2}d^{1/2}\Vert g\Vert _{L_\infty}\rbrace$. Taking \eqref{w_eps-...=} into account, we arrive at estimate \eqref{w-eps - estimate}.
\end{proof}

\subsection{End of the proof of Theorem~\ref{Theorem solutions}. $L_2$-estimate for the function $\mathbf{u}_\varepsilon -\mathbf{v}_\varepsilon+\mathbf{w}_\varepsilon$}
We have
\begin{equation*}
\begin{cases}
(\partial _t^2 +A_{\varepsilon})(\mathbf{u}_\varepsilon -\mathbf{v}_\varepsilon+\mathbf{w}_\varepsilon)=
A^0\mathbf{u}_0-A_\varepsilon \left(I+\varepsilon R_\mathcal{O}\Lambda ^\varepsilon S_\varepsilon b(\mathbf{D})P_\mathcal{O}\right){\mathbf{u}}_0\quad\mathrm{in}\;\mathcal{O},
\\
\mathbf{u}_\varepsilon -\mathbf{v}_\varepsilon+\mathbf{w}_\varepsilon\vert _{\partial\mathcal{O}}=0,\\
(\mathbf{u}_\varepsilon -\mathbf{v}_\varepsilon+\mathbf{w}_\varepsilon)(\cdot,0)=0,
\quad 
\partial _t(\mathbf{u}_\varepsilon -\mathbf{v}_\varepsilon+\mathbf{w}_\varepsilon)(\cdot,0)=0.
\end{cases}
\end{equation*}
So, 
\begin{equation*}
\begin{split}
(&\mathbf{u}_\varepsilon -\mathbf{v}_\varepsilon+\mathbf{w}_\varepsilon)(\cdot,t)
\\
&=
\int _0^t A_{D,\varepsilon}^{-1/2}\sin ((t-s)A_{D,\varepsilon}^{1/2})\left(A^0\mathbf{u}_0-A_\varepsilon \left(I+\varepsilon R_\mathcal{O} \Lambda ^\varepsilon S_\varepsilon b(\mathbf{D})P_\mathcal{O}\right){\mathbf{u}}_0\right)(\cdot,s)\,ds
.
\end{split}
\end{equation*}
By Corollary~\ref{Corollary Aeps()-A0},
\begin{equation*}
\Vert (\mathbf{u}_\varepsilon -\mathbf{v}_\varepsilon+\mathbf{w}_\varepsilon)(\cdot,t)\Vert _{L_2(\mathcal{O})}
\leqslant\int _0^t\Vert A_{D,\varepsilon}^{-1/2}\Vert _{H^{-1}(\mathcal{O})\rightarrow L_2(\mathcal{O})}C_1\varepsilon\Vert \mathbf{u}_0(\cdot,s)\Vert _{H^2(\mathcal{O})}\,ds.
\end{equation*}
(Our assumptions  on $\boldsymbol{\phi}$, $\boldsymbol{\psi}$, and $\mathbf{F}$ guarantee that $\mathbf{u}_0(\cdot,t)\in H^2(\mathcal{O};\mathbb{C}^n)\cap H^1_0(\mathcal{O};\mathbb{C}^n)$.) 
Together with \eqref{ADeps-1/2 L2 to H1} and \eqref{u0 in H2} this implies
\begin{equation*}
\begin{split}
\Vert &(\mathbf{u}_\varepsilon -\mathbf{v}_\varepsilon+\mathbf{w}_\varepsilon)(\cdot,t)\Vert _{L_2(\mathcal{O})}
\\
&\leqslant c_*C_1\varepsilon\vert t\vert 
(\Vert \boldsymbol{\phi}\Vert _{H^2(\mathcal{O})}+c_*\Vert \boldsymbol{\psi}\Vert _{H^1(\mathcal{O})}
+c_*\Vert\mathbf{F}\Vert _{L_1((0,t);H^1(\mathcal{O}))}
).
\end{split}
\end{equation*}

We arrive at the following result.

\begin{lemma}
\label{Lemma ueps-veps+weps}
Let $\mathbf{u}_\varepsilon$ be the solution of the problem~\eqref{u_eps problem F=0}. Let $\mathbf{v}_\varepsilon$ be the first order approximation \eqref{v_eps=} to the solution $\mathbf{u}_\varepsilon$, where $\mathbf{u}_0$ is the solution of the effective problem~\eqref{u0 problem F=0}. Let the discrepancy $\mathbf{w}_\varepsilon$ be the solution of the problem \eqref{Discrepancy problem}. 
When for $0<\varepsilon\leqslant 1$ and $t\in(0,T)$ we have
\begin{equation*}
\Vert (\mathbf{u}_\varepsilon -\mathbf{v}_\varepsilon +\mathbf{w}_\varepsilon )(\cdot,t)\Vert _{L_2(\mathcal{O})}\leqslant C_{4}\varepsilon\vert t\vert (\Vert \boldsymbol{\phi}\Vert _{H^2(\mathcal{O})}+\Vert \boldsymbol{\psi}\Vert _{H^1(\mathcal{O})}
+\Vert\mathbf{F}\Vert _{L_1((0,t);H^1(\mathcal{O}))}
).
\end{equation*}
The constant $C_{4}=c_*C_1\max\lbrace 1;c_*\rbrace $ depends only on $m$, $d$, $\alpha _0$, $\alpha _1$, $\Vert g\Vert _{L_\infty}$, $\Vert g^{-1}\Vert _{L_\infty}$, the parameters of the lattice $\Gamma$, 
and the domain $\mathcal{O}$.
\end{lemma}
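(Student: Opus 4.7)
\textbf{Proof plan for Lemma~\ref{Lemma ueps-veps+weps}.} The plan is to identify the Cauchy problem solved by $\mathbf{u}_\varepsilon-\mathbf{v}_\varepsilon+\mathbf{w}_\varepsilon$, represent it via Duhamel's formula, and then reduce the bound to the already-available $H^{-1}$-approximation from Corollary~\ref{Corollary Aeps()-A0} and the $H^2$-bound~\eqref{u0 in H2} for $\mathbf{u}_0$.

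First I would verify that $\mathbf{u}_\varepsilon-\mathbf{v}_\varepsilon+\mathbf{w}_\varepsilon$ solves a homogeneous first initial-boundary value problem. For the boundary condition this is immediate: the corrector $\varepsilon R_\mathcal{O}\Lambda^\varepsilon S_\varepsilon b(\mathbf{D})\widetilde{\mathbf{u}}_0$ appears with equal and opposite signs in $\mathbf{v}_\varepsilon$ and in the boundary trace of $\mathbf{w}_\varepsilon$ prescribed in \eqref{Discrepancy problem}. The same cancellation yields zero initial data, since $\mathbf{w}_\varepsilon(\cdot,0)$ and $\partial_t\mathbf{w}_\varepsilon(\cdot,0)$ are chosen exactly to kill the $t=0$ contribution of $\mathbf{v}_\varepsilon-\mathbf{u}_0$. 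Next, applying $\partial_t^2+A_\varepsilon$: the $\partial_t^2$ terms hitting the correctors in $\mathbf{v}_\varepsilon$ and $\mathbf{w}_\varepsilon$ cancel by construction of the right-hand side in \eqref{Discrepancy problem}, while the remaining contributions collapse to $A^0\mathbf{u}_0-A_\varepsilon(I+\varepsilon R_\mathcal{O}\Lambda^\varepsilon S_\varepsilon b(\mathbf{D})P_\mathcal{O})\mathbf{u}_0$ once one uses that $\mathbf{u}_\varepsilon$ solves \eqref{u_eps problem F=0} and $\mathbf{u}_0$ solves \eqref{u0 problem F=0}.

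Then, by the standard Duhamel representation for the hyperbolic equation with zero initial data,
\begin{equation*}
(\mathbf{u}_\varepsilon-\mathbf{v}_\varepsilon+\mathbf{w}_\varepsilon)(\cdot,t)=\int_0^t A_{D,\varepsilon}^{-1/2}\sin((t-s)A_{D,\varepsilon}^{1/2})\,\bigl[A^0-A_\varepsilon(I+\varepsilon R_\mathcal{O}\Lambda^\varepsilon S_\varepsilon b(\mathbf{D})P_\mathcal{O})\bigr]\mathbf{u}_0(\cdot,s)\,ds.
\end{equation*}
Taking $L_2$-norms inside the integral, I would use the trivial spectral bound $\|A_{D,\varepsilon}^{-1/2}\sin((t-s)A_{D,\varepsilon}^{1/2})\|_{H^{-1}\to L_2}\le \|A_{D,\varepsilon}^{-1/2}\|_{H^{-1}\to L_2}\le c_*$ coming from \eqref{ADeps-1/2 L2 to H1} (the sine operator is a contraction in the functional calculus of $A_{D,\varepsilon}^{1/2}$). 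Combined with Corollary~\ref{Corollary Aeps()-A0} applied pointwise in $s$ to $\mathbf{f}=\mathbf{u}_0(\cdot,s)\in H^2\cap H^1_0$, the integrand is bounded by $c_* C_1\varepsilon\,\|\mathbf{u}_0(\cdot,s)\|_{H^2(\mathcal{O})}$.

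Finally I would insert the energy-type bound \eqref{u0 in H2}, which is time-uniform on $(0,t)$, and integrate in $s\in(0,t)$ to pick up the factor $|t|$. Setting $C_4:=c_*C_1\max\{1;c_*\}$ yields the stated estimate. The main obstacle I foresee is purely bookkeeping: verifying carefully that all boundary and initial traces of $\mathbf{v}_\varepsilon$ and $\mathbf{w}_\varepsilon$ cancel so that $\mathbf{u}_\varepsilon-\mathbf{v}_\varepsilon+\mathbf{w}_\varepsilon$ is indeed a $C([0,T];H^1_0)$-solution to which Duhamel's formula applies; the analytic content has already been isolated in Corollary~\ref{Corollary Aeps()-A0} and estimate \eqref{ADeps-1/2 L2 to H1}.
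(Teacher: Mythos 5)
Your proposal is correct and coincides with the paper's own argument: the same homogeneous initial-boundary value problem for $\mathbf{u}_\varepsilon-\mathbf{v}_\varepsilon+\mathbf{w}_\varepsilon$ with right-hand side $A^0\mathbf{u}_0-A_\varepsilon(I+\varepsilon R_\mathcal{O}\Lambda^\varepsilon S_\varepsilon b(\mathbf{D})P_\mathcal{O})\mathbf{u}_0$, the same Duhamel representation, and the same combination of Corollary~\ref{Corollary Aeps()-A0}, the bound \eqref{ADeps-1/2 L2 to H1}, and estimate \eqref{u0 in H2}, yielding $C_4=c_*C_1\max\lbrace 1;c_*\rbrace$. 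No gaps beyond the bookkeeping you already flagged.
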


Together with identity \eqref{v_eps=} and inequality \eqref{w-eps - estimate}, this result imply estimate 
\eqref{Th solutions} 
with the constant $C_2:=C_3+C_{4}$. The proof of Theorem~\ref{Theorem solutions} is complete.


\begin{thebibliography}{XXXX}

\bibitem[BSu1]{BSu} M.~Sh.~Birman and T.~A.~Suslina,
\textit{Second order periodic differential operators. Threshold properties and homogenization},
Algebra i Analiz {\bf 15} (2003), no. 5, 1--108; English transl., St.~Petersburg Math. J. {\bf 15} (2004), no.~5, 639--714.

\bibitem[BSu2]{BSu05} 
M.~Sh.~Birman and T.~A.~Suslina,
\textit{Homogenization with corrector term for periodic elliptic differential operators},
Algebra i Analiz {\bf 17} (2005), no.~6, 1--104;
English transl., St.~Petersburg Math. J. {\bf 17} (2006), no.~6, 897--973.

\bibitem[BSu3]{BSu06} M.~Sh.~Birman and T.~A.~Suslina,
\emph{Homogenization with corrector term for periodic differential
operators. Approximation of solutions in the Sobolev class  $H^1(\mathbb{R}^d)$},
Algebra i Analiz  \textbf{18} (2006), no. 6, 1--130;
English transl.,
St.~Petersburg  Math. J. \textbf{18} (2007), no. 6, 857--955.

\bibitem[BSu4]{BSu08} M.~Sh.~Birman and T.~A.~Suslina, \textit{Operator error estimates in the homogenization problem for nonstationary periodic equations}, Algebra i Analiz {\bf 20} (2008), no.~6, 30--107; English transl.,
St.~Petersburg  Math. J. {\bf 20} (2009), no.~6, 873--928.

\bibitem[CDaGr]{CDGr} D. Cioranescu, A. Damlamian, and G. Griso, \textit{The periodic unfolding method. Theory and applications to partial differential problems}, Springer, Series in Contemporary Mathematics, vol.~3, 2018.

\bibitem[DSu1]{DSu1}	M. Dorodnyi and T. Suslina, \textit{Spectral Approach to Homogenization of Hyperbolic Equations with Periodic
Coefficients}, J. Differ. Equ. {\bf 264} (2018), no.~12, 7463--7522.

\bibitem[DSu2]{DSuNew} M. A. Dorodnyi and T. A. Suslina, \textit{Homogenization of hyperbolic equations: operator estimates with correctors taken into account}, Funktsional. Anal. i Prilozhen. {\bf 57} (2023), no.~4,  123--129;  English transl., Funct. Anal. Appl. {\bf 57} (2023), no.~4,  to appear.

\bibitem[Gr1]{Gr1} G. Griso, \textit{Error estimate and unfolding for periodic homogenization}, Asymptot. Anal. {\bf 40} (2004), no. 3/4, 269--286.

\bibitem[Gr2]{Gr2} G. Griso, \textit{Interior error estimate for periodic homogenization}, Anal. Appl. {\bf 4} (2006), no. 1, 61--79.

\bibitem[KeLiSh]{KeLiSh} C. E. Kenig, F. Lin, and Z. Shen, \textit{Convergence rates in $L^2$
for elliptic homogenization problems}, Arch.
Rat. Mech. Anal. {\bf 203} (2012), no. 3, 1009--1036.

\bibitem[KoE]{KoE} V. A. Kondrat’ev and S. D. Eidel’man, \textit{About conditions on boundary surface in the theory of elliptic 
boundary value problems}, Dokl. Akad. Nauk SSSR {\bf 246} (1979), no.~4, 812--815; English transl., Soviet
Math. Dokl. {\bf 20} (1979), 261--263.

\bibitem[MaSh]{MaSh}  V. G. Maz’ya and T. O. Shaposhnikova, \textit{Theory of multipliers in spaces  of differentiable functions}, 
Monographs and Studies in Mathematica, vol. 23, Brookling (NY), 1985.

\bibitem[McL]{McL} W.~McLean, \textit{Strongly elliptic systems and boundary integral equations}, Cambridge: Cambridge Univ. Press, 2000.

\bibitem[M1]{M1} Yu. M. Meshkova, \textit{On homogenization of the first initial-boundary value problem for periodic hyperbolic systems}, Appl. Anal. {\bf 99} (2020), no.~9, 1528--1563.

\bibitem[M2]{M2} Yu. M. Meshkova, \textit{On operator error estimates for homogenization of hyperbolic systems with periodic coefficients}, J. Spectral Theory {\bf 11} (2021), no.~2, 587--660.

\bibitem[MSu]{MSu_parabol} Yu. M. Meshkova and T. A. Suslina, \textit{Homogenization of initial boundary value problems for parabolic systems with periodic coefficients}, Appl. Anal. {\bf 95} (2016), no.~8, 1736--1775.

\bibitem[MoV]{MoV} Sh. Moskow and M. Vogelius, \textit{First-order corrections to the homogenised eigenvalues of a periodic composite medium. A convergence proof}, Proc. Roy. Soc. Edinburgh Sect. A {\bf 127} (1997), no.~6, 1263--1299.

\bibitem[PSu]{PSu}	M.~A.~Pakhnin and T. A. Suslina, \textit{Operator error estimates for homogenization of the elliptic Dirichlet problem in a bounded domain}, Algebra i Analiz {\bf 24} (2012), no.~6,  139--177; St. Petersburg Math. J. {\bf 24} (2013), no.~6, 949--976. 

\bibitem[Sh]{Sh} Z. Shen, \textit{Periodic homogenization of elliptic systems}, Advances in Partial Differential Equations, Oper. Theory Adv. Appl., vol. 296, Birkh\"auser/Springer, 2018.

\bibitem[St]{St} W. Stekloff, \textit{Sur les expressions asymptotiques de certaines fonctions, de\'efinies par les e\'equations diffe\'erentielles line\'eaires du second ordre, et leurs applications au proble\`eme du de\'eveloppement d'une fonction arbitraire en se\'eries proce\'edant suivant les-dites fonctions}, Communications de la Socie\'ete\'e mathe\'ematique de Kharkow. 2-\'ee s\'erie {\bf 10} (1907), 97--199.

\bibitem[Su1]{SuL2} T.~A.~Suslina, \textit{Homogenization of the Dirichlet problem for elliptic systems: $L_2$-operator error estimates},  Mathematika {\bf 59} (2013), no.~2, 463--476.

\bibitem[Su2]{Su07} T. A. Suslina, \textit{Homogenization of a periodic parabolic Cauchy problem}, Nonlinear
equations and spectral theory, 201--233, Amer. Math. Soc. Transl. (2), vol. 220, 
Amer. Math. Soc., Providence, RI, 2007.

\bibitem[Su3]{Su3} T. A. Suslina, \textit{Homogenization of the Neumann problem for elliptic systems with periodic coefficients},
SIAM J. Math. Anal. {\bf 45} (2013), no. 6, 3453--3493.

\bibitem[Su4]{Su4} T. A. Suslina, \textit{Homogenization of elliptic operators with periodic coefficients in dependence of the spectral 
parameter}, Algebra i Analiz {\bf 27} (2015), no. 4, 87--166; English transl., St. Petersburg Math. J. {\bf 27} (2016),
no. 4, 651--708.

\bibitem[Su5]{SuUMN} T. A. Suslina, \textit{Operator-theoretic approach to averaging Schr\"odinger-type equations with periodic coefficients}, Uspekhi Mat. Nauk {\bf 78 (474)} (2023), no.~6, 47--178; English transl., Russian~Math.~Surveys {\bf 78} (2023), no.~6, to appear.

\bibitem[Zh1]{Zh1} V. V. Zhikov, \textit{On the operator estimates in the homogenization theory}, Dokl. Ros. Akad. Nauk {\bf 403}
(2005), no. 3, 305--308; English transl., Dokl. Math. {\bf 72} (2005), 535--538.
\bibitem[Zh2]{Zh2} V. V. Zhikov, \textit{On some estimates in homogenization theory}, Dokl. Ros. Akad. Nauk {\bf 406} (2006), no. 5,
597--601; English transl., Dokl. Math. {\bf 73 }(2006), 96--99.


\bibitem[ZhPas1]{ZhPas1}  V.~V.~Zhikov and S.~E.~Pastukhova, \textit{On operator estimates for some problems in homogenization theory}, Russ. J. Math. Phys. {\bf 12} (2005), no.~4, 515--524.

\bibitem[ZhPas2]{ZhPasUMN}
V.~V.~Zhikov and S.~E.~Pastukhova,
\emph{Operator estimates in homogenization theory}, Uspekhi Matem. Nauk {\bf 71 (429)} (2016), no.~3, 27--122; English transl., Russian~Math.~Surveys {\bf 71} (2016), no.~3, 417--511. 
\end{thebibliography}
\end{document}